\documentclass[12pt,reqno]{amsart}
\usepackage[pagebackref,colorlinks=true]{hyperref}
\usepackage{float}
\usepackage{changepage}
\usepackage{verbatim}
\usepackage{amsfonts,amsmath,amsthm,amssymb,amscd,enumerate,eucal,url}
\usepackage{mathtools}
\usepackage[margin=1in]{geometry}
\usepackage{xcolor}
\usepackage{tikz}
\usepackage{microtype}
\usetikzlibrary{arrows.meta,positioning}

\usepackage{mathrsfs}

\numberwithin{equation}{section}
\newlength{\extralength}
\newcommand{\R}{\mathbb{R}}

\newtheorem{Theorem}{Theorem}[section]
\newtheorem{Lemma}{Lemma}[section]
\newtheorem{Proposition}{Proposition}[section]

\newtheorem{Remark}{Remark}[section]

\newtheoremstyle{axiomstyle}
  {}{}                 % razmaci
  {\itshape}           % telo
  {}                   % indent
  {\bfseries}          % naslov
  {.}                  % tačka iza broja
  {0.5em}              % razmak
  {\thmname{#1}~\thmnumber{#2}\thmnote{ (#3)}} % OVO PRIKAZUJE (RG0...)
  
\theoremstyle{axiomstyle}

\title{Multidimensional cost geometry}
\date{\today}

\author{Jonathan Washburn}
\address{Recognition Physics Institute, Austin, Texas, USA}
\email{jon@recognitionphysics.org}

\author{Milan Zlatanovi\'c}
\address{Department of Mathematics, Faculty of Science and Mathematics, University of Ni\v s, Vi\v segradska 33, 18000 Ni\v s, Serbia}
\email{zlatmilan@yahoo.com}

\author{Philip Beltracchi}
\address{Recognition Physics Institute, Austin, Texas, USA}
\email{philipbeltracchi@gmail.com}

\begin{document}

\newcommand{\add}[1]{{\color{blue}#1}}
\newcommand{\del}[1]{{\color{red}#1}}
\newenvironment{added}{\begingroup\color{blue}}{\endgroup}
\newenvironment{deleted}{%
  \begingroup\color{red}%
  % Prevent deleted text from affecting citation order/numbering.
  \renewcommand{\cite}[2][]{\relax}%
  % Prevent deleted text from affecting labels/counters/structure.
  \renewcommand{\label}[1]{\relax}%
  \let\ref\relax
  \let\eqref\relax
  \let\pageref\relax
  \renewcommand{\section}[1]{\par\medskip\noindent{\bfseries [deleted section] ##1}\par}%
  \renewcommand{\subsection}[1]{\par\medskip\noindent{\bfseries [deleted subsection] ##1}\par}%
  \renewcommand{\subsubsection}[1]{\par\medskip\noindent{\bfseries [deleted subsubsection] ##1}\par}%
  \renewcommand{\paragraph}[1]{\par\noindent{\bfseries [deleted paragraph] ##1}\par}%
  \renewcommand{\subparagraph}[1]{\par\noindent{\bfseries [deleted subparagraph] ##1}\par}%
  \renewenvironment{equation}{\[\ignorespaces}{\]\ignorespacesafterend}%
  \renewenvironment{theorem}[1][]{\par\medskip\noindent\textbf{[deleted theorem] }\ignorespaces}{\par\medskip}%
  \renewenvironment{lemma}[1][]{\par\medskip\noindent\textbf{[deleted lemma] }\ignorespaces}{\par\medskip}%
  \renewenvironment{proposition}[1][]{\par\medskip\noindent\textbf{[deleted proposition] }\ignorespaces}{\par\medskip}%
  \renewenvironment{corollary}[1][]{\par\medskip\noindent\textbf{[deleted corollary] }\ignorespaces}{\par\medskip}%
  \renewenvironment{definition}[1][]{\par\medskip\noindent\textbf{[deleted definition] }\ignorespaces}{\par\medskip}%
  \renewenvironment{remark}[1][]{\par\medskip\noindent\textbf{[deleted remark] }\ignorespaces}{\par\medskip}%
  \renewenvironment{example}[1][]{\par\medskip\noindent\textbf{[deleted example] }\ignorespaces}{\par\medskip}%
  \renewenvironment{notation}[1][]{\par\medskip\noindent\textbf{[deleted notation] }\ignorespaces}{\par\medskip}%
  \renewenvironment{convention}[1][]{\par\medskip\noindent\textbf{[deleted convention] }\ignorespaces}{\par\medskip}%
  \renewenvironment{axiom}[1][]{\par\medskip\noindent\textbf{[deleted axiom] }\ignorespaces}{\par\medskip}%
}{\endgroup}

\newcommand{\Poly}{\R[u,v]}

 \begin{abstract}In this paper, we study the geometric structure induced by the canonical reciprocal cost function and its natural $n$-dimensional extension. In logarithmic coordinates, the potential depends only on the linear combination $S=\alpha\cdot t$, and the associated Hessian metric has rank one at every point. The geometry is intrinsically degenerate and effectively one-dimensional, with an $(n-1)$-dimensional null distribution.
On the other hand, when the same function is expressed in the original $x$-coordinates, the corresponding Hessian is generically nondegenerate and defines a pseudo-Riemannian metric %(see e.g. \highlighting{\cite{ONeill1983}}) 
away from explicit singular hypersurfaces. 
We further analyze affine and Levi-Civita geodesics and compare their behavior. In particular, affine geodesics in logarithmic coordinates are globally defined, while in $x$-coordinates their behavior is restricted by the domain and the singular set.
Finally, we relate the construction to symmetrized Itakura--Saito and Bregman divergences, and give a Fisher--Rao realization of the logarithmic Hessian metric.

\bigskip

\noindent{{\bf Keywords.} Hessian geometry, degenerate metric, affine structure, Levi-Civita connection, geodesics, gradient paths, reciprocal cost.}

\medskip 

\noindent{\bf MSC (2020):} 53A15, 53B20, 53C21, 53C25.

\end{abstract}
\maketitle

\setcounter{tocdepth}{3}

%\tableofcontents

\newcommand{\config}{\mathcal{C}}
\newcommand{\configR}{\mathcal{C}_R}

\section{Introduction and Motivation}

In this paper, we study the geometric structures generated by the canonical reciprocal cost function and its multidimensional extension. The canonical cost function in \mbox{one-dimension}
\[
J(x)=\frac12(x+x^{-1})-1
\] 
was introduced in~\cite{WZ}. Generic cost functions are ubiquitous in optimization techniques, and different cost functions can have different motivations. In the paper~\cite{WZE}, it is proved that this particular function
appears as a unique solution of  the polynomial composition law together with the curvature calibration.

In this paper, we construct the multidimensional extension of the reciprocal cost, examine what kind of multidimensional
geometry is induced by the Hessian of the function $J$, and explore how this geometry
depends on the choice of affine structure. Hessian geometry appears on affine manifolds with a flat connection. 
A metric $g$ is called Hessian if it can be written locally as $g = Dd\varphi$ for some potential $\varphi$ ~\cite{Amari, shima2007}. Here $D$ denotes a flat affine connection, so in affine coordinates the metric takes the form
$
g_{ij} = \partial_i \partial_j \varphi.
$
The pair $(D,g)$ is called a Hessian structure. Such structures are related to several geometric theories. 
They are real analogs of K\"ahler geometry~\cite{shima2007}, since K\"ahler metrics are given by complex Hessians. 
They also appear in affine differential geometry and in information geometry~\cite{Amari, Amari2, PardoGuerra2026}. 
In this way, Hessian geometry connects different areas. 

Specifically, we consider the function $J$ in two coordinate systems used in previous papers~\cite{WZ,WZE}: the original variables
$x$ and the logarithmic variables $t=\log x$. Although these are related by a
smooth change in variables, the Hessian construction depends on the choice of
affine structure, and therefore the induced geometries are not equivalent. In logarithmic coordinates, the function $J$ takes the {form} %MDPI: Please double check all equations and ensure no duplicate equations in the whole paper %MZ We have checked all equations carefully. There are no duplicate equations
\[
J(t)=\cosh\!\left(\sum_{i=1}^n \alpha_i t_i\right)-1.
\]

{Hence} %MDPI: Please confirm whether the no-indent format of paragraph beginning with capital letter should be retained or removed. Please check all similar paragraphs in the main text. %MZ Please retain the standard MDPI formatting style.
 $J$ depends only on the scalar quantity $S(t)=\sum_{i=1}^n \alpha_i t_i$. 
It follows that the Hessian with respect to the $t$-coordinates has rank one at every point. 
The induced geometry is degenerate, with a distinguished direction given by $\alpha$ 
and an integrable $(n-1)$-dimensional null distribution. As a consequence, the associated Hessian structure in logarithmic coordinates effectively reduces to a one-dimensional geometry, even though the ambient space is $n$-dimensional.
On the other hand, when $J$ is expressed in the original $x$-coordinates,
the corresponding Hessian matrix is generically nondegenerate and defines a
pseudo-Riemannian metric away from a singular set. Thus, the same function
gives rise to two qualitatively different geometric structures.

 We further
study the corresponding affine and Levi-Civita geodesics, with particular
attention to the distinguished direction  and to the behavior near the
degeneracy locus. The Hessian manifold defined with logarithmic coordinates is geodesically complete for the affine geodesics, but due to the non-invertibility of the metric, an additional Levi-Civita connection is not defined. Conversely, the Hessian manifold defined with the original coordinates is geodesically incomplete for the affine geodesics (due to a coordinate domain restriction) and for Levi-Civita geodesics (due to the presence of curvature singularities of that connection).

This paper is organized as follows. In Section \ref{define_multiDcost}, we extend the cost function to multiple variables. Next, we examine the construction of Hessian metrics and their properties for an arbitrary dimension in Section \ref{General_geo}. We then examine the properties of the 2-dimensional versions of these manifolds and the trajectories on them to further illustrate their behavior in Section \ref{2D_geo}. In Section \ref{application}, we interpret our results from the perspective of information geometry, relating the cost function to Bregman divergences and Fisher--Rao metrics.
We give our conclusions in Section \ref{conclusion}.

\section{Canonical Reciprocal Cost and Its \boldmath{$n$}-Dimensional Extension}\label{define_multiDcost}
We start with the one-dimensional canonical reciprocal cost function\begin{equation}\label{J-1D}
J(x):=\frac12\left(x+x^{-1}\right)-1,
\qquad x>0.
\end{equation}
This function is our main model. It satisfies the basic properties
\begin{itemize}
\item[(i)] $J(1)=0$,
\item[(ii)] $J(x)=J(x^{-1})$,
\item[(iii)] $J(x)\ge 0$ for all $x>0$.
\end{itemize}
In logarithmic coordinates $t=\log x$, one has
\begin{equation}\label{J-cosh}
J(e^t)=\cosh(t)-1.
\end{equation} 
The importance of this function comes from the following theorem. {We denote by $\mathbb{R}^{+}$ the set of positive real numbers.}

\begin{Theorem}[\cite{WZ}]
Let $F:\mathbb{R}^{+}\to\mathbb{R}$ satisfy composition law 
\begin{equation}\label{comp-law}
F(xy)+F\!\left(\frac{x}{y}\right)
=
2F(x)F(y)+2F(x)+2F(y).
\end{equation}
If the unit log-curvature
\begin{equation}\label{unit-curvature}
\lim_{t\to 0}\frac{2F(e^t)}{t^2}=1
\end{equation}
is satisfied, then
\[
F(x)=\frac12\left(x+x^{-1}\right)-1=J(x).
\]
\end{Theorem}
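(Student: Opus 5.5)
The plan is to linearize the composition law \eqref{comp-law} by a shift and a change to logarithmic variables. This turns it into d'Alembert's functional equation, and the unit log-curvature condition \eqref{unit-curvature} will then pin down the solution through a doubling argument. No continuity or measurability assumption on $F$ is needed beyond \eqref{unit-curvature}.

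\textbf{Step 1 (reduction to d'Alembert).} Put $H=F+1$ and $g(t)=H(e^t)$ for $t\in\R$. Substituting $F=H-1$ into \eqref{comp-law} makes all linear terms cancel:
\[
H(xy)+H(x/y)=2H(x)H(y).
\]
With $x=e^s$ and $y=e^t$ this reads
\[
g(s+t)+g(s-t)=2g(s)g(t)\qquad(s,t\in\R).
\]
The condition \eqref{unit-curvature} becomes $g(t)=1+\tfrac{t^2}{2}+o(t^2)$ as $t\to0$. The goal is $g(t)=\cosh t$, which is equivalent to $F=J$ by \eqref{J-cosh}.

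\textbf{Step 2 (doubling relation and local form).} Setting $s=t$ gives $g(2t)+g(0)=2g(t)^2$. Letting $t\to0$ and using \eqref{unit-curvature} gives $g(t)\to1$, hence $g(2t)\to1$. So $1+g(0)=2$, that is, $g(0)=1$. This yields the doubling identity
\[
g(2t)=2g(t)^2-1.
\]
From the expansion there is $\delta>0$ such that $g(t)\ge1$ for $|t|<\delta$. On that interval write $g(t)=\cosh h(t)$ with a unique $h(t)\ge0$. Since $2\cosh^2 h-1=\cosh 2h$, the doubling identity gives $\cosh h(2t)=\cosh 2h(t)$, and therefore $h(2t)=2h(t)$ whenever $|2t|<\delta$.

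Moreover, $\cosh h(u)-1=\tfrac{u^2}{2}+o(u^2)$ together with $h(u)\to0$ forces $h(u)/|u|\to1$ as $u\to0$. For a fixed $t$ with $0<|t|<\delta$, iterating the doubling of $h$ gives
\[
\frac{h(t)}{|t|}=\frac{h(t/2^k)}{|t/2^k|}\quad\text{for every }k,
\]
and letting $k\to\infty$ shows $h(t)=|t|$. Hence $g(t)=\cosh t$ on $(-\delta,\delta)$.

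\textbf{Step 3 (globalization).} Any $t\in\R$ can be written as $t=2^k u$ with $|u|<\delta$ and $k\ge0$. The doubling identity $g(2t)=2g(t)^2-1$ agrees with $\cosh 2t=2\cosh^2t-1$. Induction on $k$ therefore gives $g(t)=\cosh t$ for all $t$. Returning to $x=e^t$, we get $F(x)=\cosh(\log x)-1=\tfrac12(x+x^{-1})-1=J(x)$.

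The main obstacle is regularity. The standard route solves d'Alembert's equation under a continuity or measurability hypothesis, and none is assumed here. The device of Step 2, writing $g=\cosh h$ near $0$ and using the linear scaling $h(2t)=2h(t)$, is what removes the need for such a hypothesis. The point to check carefully is that $g\ge1$ near $0$, so that $h$ is well defined. This follows from the strict expansion $g(t)=1+\tfrac{t^2}{2}+o(t^2)$, which gives $g(t)>1$ for small $t\neq0$.
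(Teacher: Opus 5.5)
Your proof is correct, and it takes a different route from the one the paper points to. The paper does not prove this theorem itself. It attributes it to \cite{WZ} and only remarks that \eqref{comp-law} is a multiplicative d'Alembert-type equation, which the logarithmic substitution turns into an additive one, citing the classical theory \cite{aczel1966,aczeldhombres1989}.

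Your Step~1 is exactly that reduction. The divergence is in how you solve the resulting equation:
\begin{itemize}
\item[(i)] \emph{The classical route} solves $g(s+t)+g(s-t)=2g(s)g(t)$ in full. Under continuity or measurability the solutions are $g\equiv 0$, $\cos(ct)$ and $\cosh(ct)$, and \eqref{unit-curvature} then discards the first two and forces $c=\pm1$. This exhibits the whole solution set of \eqref{comp-law} and shows precisely what the calibration selects. However, it needs a regularity hypothesis, or a separate argument that the calibration forces one.
\item[(ii)] \emph{Your route} uses only the diagonal specialization $s=t$, that is, the instance $y=x$ of \eqref{comp-law}, which gives $g(2t)=2g(t)^2-1$. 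You combine this with the second-order behaviour at $0$. Writing $g=\cosh h$ with $h\ge 0$ near $0$ linearizes the doubling to $h(2t)=2h(t)$. The calibration gives $h(u)/|u|\to 1$, so $h(t)=|t|$ locally, and doubling then propagates $g=\cosh$ to all of $\mathbb{R}$.
\end{itemize}

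The delicate points check out. Since $g(t)>1$ for small $t\neq 0$ and $g(0)=1$, $h$ is well defined. Injectivity of $\cosh$ on $[0,\infty)$ justifies $h(2t)=2h(t)$. Continuity of $\operatorname{arccosh}$ at $1$ gives $h(u)\to 0$, which is what turns $\cosh h(u)-1=\tfrac{u^2}{2}+o(u^2)$ into $h(u)/|u|\to 1$.

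Your argument is more elementary and self-contained, and it needs no regularity assumption. It also proves a slightly stronger uniqueness statement, since the off-diagonal part of the composition law is never used.
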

This theorem shows that the canonical reciprocal cost \eqref{J-1D} is unique under the composition law and the unit log-curvature normalization. The relation \eqref{comp-law} may be viewed as a multiplicative d'Alembert-type functional equation on the positive reals~\cite{WZE,aczel1966,aczeldhombres1989}. The logarithmic reparametrization transforms it into an additive one-dimensional variable.  

We stress that uniqueness in one dimension does not extend to higher dimensions. 
Some additional assumptions are needed. 
Therefore, it is natural to ask for a suitable multidimensional extension, 
and this question motivates the rest of the paper.

Let us consider the $n$-dimensional function on $(\mathbb{R}^{+})^n$ given by
\begin{equation}\label{J-n}
J(x_1,\dots,x_n)
=
\frac12\left(
\prod_{i=1}^n x_i^{\alpha_i}
+
\prod_{i=1}^n x_i^{-\alpha_i}
\right)-1,
\qquad x_i>0,
\end{equation}
where $\alpha_1,\dots,\alpha_n\in\mathbb{R}$. Let
\begin{equation}\label{RR}
R(x_1,\dots,x_n)
=
\prod_{i=1}^n x_i^{\alpha_i}.
    \end{equation}
Then \eqref{J-n} can be written in the form
\[
J(x_1,\dots,x_n)
=
\frac12\left(R+R^{-1}\right)-1.
\]
Thus, all dependence on the coordinates $x_i$ is through the single scalar
$R(x)$, where {$x=(x_1,\dots,x_n)$.}
Hence, the geometry is essentially one-dimensional.

If we assume permutation symmetry, that is,
\[
J(x_{\sigma(1)},\dots,x_{\sigma(n)})
=
J(x_1,\dots,x_n)
\qquad
\text{for all }\sigma\in S_n,
\]
then all variables are equally weighted. More precisely,

\begin{Lemma}
If the function \eqref{J-n} is permutation symmetric, $\alpha=(\alpha_1,\dots,\alpha_n)\neq 0$, and $n\ge3$ then
\(
\alpha_1=\alpha_2=\cdots=\alpha_n.
\) 
If $n=2$, $\alpha_1=\pm\alpha_2$ allows for permutation symmetry.
\end{Lemma}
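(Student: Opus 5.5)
Since $J=\frac12(R+R^{-1})-1$ and $\cosh$ is even and strictly increasing on $[0,\infty)$, I will reduce permutation symmetry of $J$ to a statement about the linear form $S(t)=\sum_i\alpha_i t_i$ in logarithmic coordinates $t_i=\log x_i$. Writing $J=\cosh(S)-1$, the equality $J(x_{\sigma(1)},\dots,x_{\sigma(n)})=J(x)$ becomes $\cosh(S(\sigma t))=\cosh(S(t))$. This is equivalent to $|S(\sigma t)|=|S(t)|$ for all $t\in\mathbb{R}^n$, where $S(\sigma t)=\sum_i\alpha_i t_{\sigma(i)}=\sum_j\alpha_{\sigma^{-1}(j)}t_j$. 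So for each $\sigma$ the two linear forms $\ell_1(t)=\alpha\cdot t$ and $\ell_2(t)=(\sigma^{-1}\alpha)\cdot t$ have equal absolute values everywhere.

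The key step is the standard fact that two linear forms with $|\ell_1|=|\ell_2|$ on $\mathbb{R}^n$ satisfy $\ell_2=\pm\ell_1$. To see this, square: $\ell_1^2=\ell_2^2$ gives $(\ell_1-\ell_2)(\ell_1+\ell_2)=0$ as polynomials. Since $\R[t_1,\dots,t_n]$ is an integral domain, one factor vanishes identically. Hence for every $\sigma\in S_n$ the permuted vector $\alpha^\sigma$ equals $\varepsilon_\sigma\alpha$ with $\varepsilon_\sigma\in\{\pm1\}$. Conversely, any such condition gives symmetry, which covers the $n=2$ claim: with $\sigma$ the transposition, $(\alpha_2,\alpha_1)=\pm(\alpha_1,\alpha_2)$, i.e. $\alpha_1=\pm\alpha_2$, and both signs indeed work.

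For $n\ge3$ I apply this to transpositions $(i\,j)$. Suppose first $\varepsilon=-1$ for some transposition $(i\,j)$. Then the swapped vector equals $-\alpha$, so $\alpha_k=-\alpha_k$, i.e. $\alpha_k=0$, for every $k\notin\{i,j\}$. Such a $k$ exists because $n\ge3$. Also $\alpha_j=-\alpha_i$, so $\alpha_i\ne0$ because $\alpha\neq0$. Now use the transposition $(i\,k)$: its swapped vector must be $\pm\alpha$. Comparing $k$-th entries gives $\alpha_i=\pm\alpha_k=0$, a contradiction. Hence every transposition has sign $+1$, so $\alpha_i=\alpha_j$ for all $i,j$.

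The only delicate point I expect is the sign bookkeeping in the $n\ge3$ case, namely ensuring that a third index $k$ is available to force the contradiction. The reduction from $\cosh$ to $|S|$ and the polynomial factorization are routine.
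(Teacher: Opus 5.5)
Your proof is correct, but at the key step it takes a different route from the paper. The paper stays in the $x$-variables. From the invariance of $R+R^{-1}$ it infers $R(x_{\sigma(1)},\dots,x_{\sigma(n)})=R(x)^{\pm1}$. Then, for a transposition $(i\,j)$, it sets $x_i=1$, $x_j=e$ and compares the exponents of the remaining free variables $x_k$. This is quick, but it leaves two steps unjustified.

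\begin{itemize}
\item It silently treats the sign as the same at every point, although the dichotomy it derives is only pointwise.
\item It dismisses the minus sign for $n\ge3$ as ``impossible for $\alpha\neq0$''. Yet the vector with $\alpha_j=-\alpha_i\neq0$ and all other entries zero is nonzero and does satisfy the minus-sign relation for $(i\,j)$. A second transposition is needed to exclude it.
\end{itemize}

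Your argument covers both points. Reducing to the linear forms $\ell_1,\ell_2$ and factoring $(\ell_1-\ell_2)(\ell_1+\ell_2)=0$ in the integral domain $\mathbb{R}[t_1,\dots,t_n]$ gives the global identity $\alpha^\sigma=\varepsilon_\sigma\alpha$ directly. (Equivalently, $\mathbb{R}^n$ is not a union of two proper subspaces.) Your use of $(i\,k)$ with a third index $k$ then supplies exactly the step the paper leaves implicit.

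Your argument also gives the converse: $J$ is permutation symmetric exactly when $\alpha^\sigma=\pm\alpha$ for every $\sigma$. In particular, this confirms that both signs genuinely work for $n=2$.
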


\begin{proof}
Permutation symmetry {assumption} gives
\[
\prod_{i=1}^n x_{\sigma(i)}^{\alpha_i}
+
\prod_{i=1}^n x_{\sigma(i)}^{-\alpha_i}
=
\prod_{i=1}^n x_i^{\alpha_i}
+
\prod_{i=1}^n x_i^{-\alpha_i}
\]
for every $\sigma\in S_n$ and all $x_i>0$. Hence
\[
\prod_{i=1}^n x_{\sigma(i)}^{\alpha_i}
=
\prod_{i=1}^n x_i^{\alpha_i}
\quad \text{or} \quad
\prod_{i=1}^n x_{\sigma(i)}^{\alpha_i}
=
\prod_{i=1}^n x_i^{-\alpha_i}.
\] 
 Suppose $n\ge3$ and consider a permutation $\sigma$ that only swaps two elements $i\neq j$. Taking $x_j=e$ and $x_i=1$, we obtain
\begin{equation}
    e^{\alpha_j}\cdot1^{\alpha_i}\cdot\prod_{k\neq i,j}^n x_{k}^{\alpha_k}= e^{\pm\alpha_i}\cdot1^{\pm\alpha_j}\prod_{k\neq i,j}^n x_{k}^{\pm\alpha_k}
\end{equation}
Since the values of $x$ are nonzero, {we obtain
\begin{equation}
    e^{\alpha_j\mp\alpha_i}= \prod_{k\neq i,j}^n x_{k}^{\pm\alpha_k-\alpha_k}.
\end{equation} 
} The left-hand side is constant, while the right-hand side depends on the variables $x_k$. 
Since this identity holds for all $x_k>0$, all exponents must vanish. Thus
\[
\pm\alpha_k-\alpha_k=0 \ (k\neq i,j), \qquad \alpha_j\mp\alpha_i=0.
\]
The minus sign implies $\alpha_k=0$ for all $k\neq i,j$, impossible for $n\ge3$ and $\alpha\ne0$. Hence, the sign is $+$ and $\alpha_i=\alpha_j$.
Since $i,j$ are arbitrary, all exponents are equal.

If $n=2$, the condition becomes
\begin{equation}
x_1^{\alpha_1}x_2^{\alpha_2}
=
x_2^{\pm\alpha_1}x_1^{\pm\alpha_2}.
\end{equation}
That is,
\begin{equation}
x_1^{\alpha_1\mp\alpha_2}
=
x_2^{\pm\alpha_1-\alpha_2}.
\end{equation}
This is independent of $x_1,x_2$ if and only if $\alpha_1=\pm\alpha_2$.
\end{proof}
Therefore, permutation symmetry is determined by choosing $a\in\mathbb{R}$ such that
\[
\alpha_i=a,
\qquad i=1,\dots,n.
\]
In this case, the cost function \eqref{J-n} becomes
\[
J(x_1,\dots,x_n)
=
\frac12\left(
\left(\prod_{i=1}^n x_i\right)^a
+
\left(\prod_{i=1}^n x_i\right)^{-a}
\right)-1.
\]{Suppose that the permutation symmetric multidimensional cost function reduces to the one-dimensional case \eqref{J-1D} when all variables are equal, {i.e.:} 
\[
J(x)=J(x,\dots,x).
\]
Because $J(x)=\tfrac12(x+x^{-1})-1$, then from equality above we get
\[
\cosh(an\log x)-1=\cosh(\log x)-1 \quad \text{for all } x>0,
\]
and we obtain $an=\pm 1$, hence $a=\pm \tfrac{1}{n}$.
}
Since $J(R)=J(R^{-1})$, both choices lead to the same cost function. Without loss of generality, we take $a=\frac1n$.
Therefore, the natural choice under the conditions of permutation symmetry and dimensional reduction is
\(
a=\frac1n.
\)
This~gives
\begin{equation}\label{J-n-canonical}
J(x_1,\dots,x_n)
=
\frac12\left(
\left(\prod_{i=1}^n x_i\right)^{1/n}
+
\left(\prod_{i=1}^n x_i\right)^{-1/n}
\right)-1.
\end{equation}
In the rest of the paper, we work with a general $\alpha$, while $\alpha_i=\frac{1}{n}$ is used as a special case. In particular, $\alpha_i=\frac{1}{n}$ ensures the cost function depends only on the geometric mean of the~variables
\[
G(x)=(x_1\cdots x_n)^{1/n}.
\]

\subsection*{Logarithmic Representation}
Let
\[
t_i=\log x_i,
\qquad i=1,\dots,n.
\]
Then
\[
R(x_1,\dots,x_n)
=
\exp\!\left(\sum_{i=1}^n \alpha_i t_i\right),
\]
and the cost function takes the form
\begin{equation}\label{J-log-form}
J(x_1,\dots,x_n)
=
\cosh\!\left(\sum_{i=1}^n \alpha_i t_i\right)-1.
\end{equation}
Thus, the $n$-dimensional function depends only on one linear combination of the logarithmic coordinates. 
This shows that the Hessian in logarithmic coordinates has rank one, independently of the ambient dimension.

\textls[-15]{Recall that under the assumption $\alpha_1=\cdots=\alpha_n=a$, the dimensional \mbox{reduction~condition}}
\[
J(x,\dots,x)=J(x)
\]
implies $a=\frac{1}{n}$, and hence $\alpha_i=\frac{1}{n}$. In this sense, the choice $\alpha_i=\frac{1}{n}$ is canonical.  In this case,  we obtain
\[
J(x_1,\dots,x_n)
=
\cosh\!\left(\frac1n\sum_{i=1}^n t_i\right)-1.
\]
Analogously to the expression in $x$-coordinates \eqref{J-n-canonical}, where the cost depends on the geometric mean, in $t$-coordinates it depends on the arithmetic mean.

\section{Geometry of General Multidimensional Cost}\label{General_geo}

The logarithmic form of the multidimensional reciprocal cost is
\begin{equation}\label{loge}
J(t)=\cosh(S(t))-1,
\qquad
S(t)=\sum_{i=1}^n \alpha_i t_i,\quad t_i=\log x_i,
\end{equation}
where $t=(t_1,\dots,t_n)\in\mathbb{R}^n$ and $\alpha=(\alpha_1,\dots,\alpha_n)^T\in\mathbb{R}^n$ is fixed. Since
\[
\frac{\partial J}{\partial t_i}
=
\sinh(S(t))\,\alpha_i,
\]
we obtain
\[
\frac{\partial^2 J}{\partial t_i\partial t_j}
=
\cosh(S(t))\,\alpha_i\alpha_j,
\qquad i,j=1,\dots,n.
\]
{Hence,} the Hessian matrix of $J$ is
\[
\nabla^2 J(t)=\cosh(S(t))\,\alpha\alpha^T.
\]
Since $\alpha\neq 0$ and $\cosh(S(t))>0$, we have
\[
\nabla^2 J(t)\alpha=\cosh(S(t))(\alpha^T\alpha)\alpha\neq 0,
\]
and therefore
\[
\operatorname{rank}(\nabla^2 J(t))=1.
\]
Thus, the Hessian is degenerate, and its image is  the one-dimensional subspace generated by $\alpha$.

{\begin{Remark}
If $J(t)=f(\alpha\cdot t)$ and $f''\neq 0$, then the Hessian $\nabla^2 J(t)$ has rank one. Indeed,
\[
\nabla^2 J(t)=f''(\alpha\cdot t)\,\alpha\alpha^T,
\]
which is a matrix of rank one. 

In particular, the rank-one property follows from the fact that $J$ depends on a linear form $S(t)=\alpha\cdot t$. The reciprocal cost is a particular example with $f(S(t))=\cosh(S(t))-1$, 
which gives the  factor $\cosh(S(t))$ in the Hessian.
\end{Remark}}

 %To illustrate the multidimensional construction, we consider the following example.  
%We combine functions of $(r,s)$ linearly so that the cost function depends on a single scalar quantity. 
%Trigonometric components corresponding to $r$, $s$, $r+s$ and $r-s$ represent basic harmonic modes of the variables.

%\begin{Example}
%Let $(r,s)\in\Omega\subset\mathbb{R}^2$ and define
%\[
%\Phi(r,s)=
%\bigl(
%\cos r,\ \sin r,\ \cos s,\ \sin s,\ 
%\cos(r+s),\ \sin(r+s),\ 
%\cos(r-s),\ \sin(r-s)
%\bigr).
%\]
%Denote its components by $\phi_i(r,s)$, $i=1,\dots,8$. 
%For $a\in\mathbb{R}^8$ define
%\[
%S_8(r,s;a)=\frac{1}{\sqrt{8}}\sum_{i=1}^8 a_i\,\phi_i(r,s),
%\qquad
%J_8=\cosh(S_8)-1.
%\]
%Then $J_8$ depends only on the scalar quantity $S_8$, showing that even in higher-dimensional ambient spaces the induced geometry is effectively one-dimensional.
%\end{Example}

\subsection{Interpreting the Hessian as a Metric in Logarithmic Coordinates}

We consider the $n$-dimensional reciprocal cost  in logarithmic coordinates (\ref{loge}). Assume $\alpha=(\alpha_1,\dots,\alpha_n)\neq 0$.  The Hessian of $J$ defines a symmetric metric tensor
\begin{equation}\label{tmet}
g_{ij}(t)
:=
\frac{\partial^2 J}{\partial t_i\partial t_j}
=
\alpha_i\alpha_j\cosh(S(t)). 
\end{equation}
The metric tensor (\ref{tmet}) is degenerate and
\[
\operatorname{rank}(g_{ij}(t))=1
\qquad
\text{for all }t\in\mathbb{R}^n .
\]
Hence, the metric is not invertible, and its kernel has dimension $n-1$. 
The quadratic form associated with $g_{ij}$ is
\begin{equation}\label{gprod}
g(v,v)=\cosh(S(t))(\alpha\cdot v)^2.
\end{equation}
The metric measures variations only in the direction of $\alpha$,
while vectors orthogonal to $\alpha$ lie in the
kernel of $g$.
So, the radical distribution
\begin{equation}\label{radicaldistribution}
\mathcal{R}_\alpha := \{v \in \mathbb{R}^n : \alpha \cdot v = 0 \}
    \end{equation}
is this kernel. It is integrable, and its leaves are the affine hyperplanes
\[
\alpha \cdot t = \mathrm{const}.
\]
Indeed, along any curve $t(s) = t(0) + s v$ for a curve parameter $s$ with $v \in \mathcal{R}_\alpha$, we have
\[
\alpha \cdot t(s) = \alpha \cdot t(0),
\]
so the flow remains in the same leaf.

Thus, the geometry in logarithmic coordinates consists of one distinguished direction given by $\alpha$ while the remaining $(n-1)$ directions form a null foliation transverse to it.

 Degenerate metric structures appear in several areas of
mathematical physics. In particular, non-invertible
metrics are studied in Carrollian geometry~\cite{Carrollian}
and in the intrinsic geometry of null hypersurfaces in general relativity (see, e.g.,~\cite{BI1991} and chapter 3 of~\cite{Poisson2009}). 
In Carrollian geometry, the metric typically has {co-rank} one and is
accompanied by a distinguished vector field generating its kernel. Likewise, null hypersurfaces in GR typically are described using an intrinsic metric of signature $(0,+,+)$.

In the present case, the situation is different. The metric
\eqref{tmet} has rank one and therefore {co-rank} $n-1$.
Hence, for $n>2$, it does not define a Carrollian structure. Only in dimension $n=2$ our Hessian metric
has the correct algebraic type for the Carrollian framework~\cite{Carrollian}. {Another area worth mentioning, where a strictly rank $1$ metric is used, is the Newton-Cartan theory (see, e.g.,~\cite{NewtonCartan}). Here, a $4$ dimensional manifold is endowed with a rank $1$ temporal metric of $t_{\mu\nu}=\tau_\mu \tau_\nu$ induced by a one form $\tau_\mu$, together with a spatial co-metric $h^{\mu\nu}$ of rank $3$  such that $h^{\mu\nu}\tau_\mu=0$, and a connection defined such that both $\nabla_\mu\tau_\nu=0$ and $\nabla_\mu h^{\mu\nu}=0$. These examples show that degenerate metrics of low rank naturally arise in mathematical physics. However, the structure considered here is different, since the degeneracy is of {co-rank} $n-1$,  in contrast to the {co-rank} one structures appearing in Carrollian and related geometries, and is induced by a cost potential. This suggests a possible interpretation in terms of effectively one-dimensional dynamics embedded in a higher-dimensional space.}

{
In logarithmic coordinates, the function $J$ depends only on  the linear form $S=\alpha\cdot t$. The Hessian has rank one and measures variations only in the direction of $\alpha$, while the $(n-1)$-dimensional distribution defined by $\alpha\cdot v=0$ lies in its kernel.

In the original $x$-coordinates, the transformation $t=\log x$ is not affine. Therefore, the Hessian construction involves additional terms, and the metric becomes generically nondegenerate outside of singular hypersurfaces.}
 
 \subsection{{Curves in the Kernel of $g$}}

The following is a consequence of the definition of the radical distribution \eqref{radicaldistribution} and describes curves along which the cost function remains constant.

Let $\gamma(\tau)$ be a curve with velocity $\dot{\gamma}=\frac{d\gamma}{d\tau}$ such that
\begin{equation}
g(\dot{\gamma},\dot{\gamma})=\cosh({S(\gamma(\tau))})\,(\alpha\cdot \dot{\gamma})^2=0.\label{nullvelocity}
\end{equation}
Since {$\cosh(S(\gamma(\tau)))>0$}, it follows that $\alpha\cdot \dot{\gamma}=0$.

Let $\beta^k$ be a set of vectors $\beta^k=(\beta^k_i)$,
$k=1,\dots,n-1$,  spanning the radical distribution~\eqref{radicaldistribution} such that
\begin{equation}\label{ortho_alphabeta}
\sum_{i=1}^n \alpha_i \beta^k_i=0. 
\end{equation} If
$t_0 = \gamma(0)$,
then any such curve can be written as
\begin{equation}
\gamma(\tau)=t_0+\sum_{k=1}^{n-1}\gamma_k(\tau)\,\beta^k,
\qquad
\dot{\gamma}(\tau)=\sum_{k=1}^{n-1}\dot{\gamma}_k(\tau)\,\beta^k.\label{curveparams}
\end{equation}
 Thus $\gamma$ lies in the affine hyperplane
\[
\alpha\cdot t=\alpha\cdot t_0,
\]
\textls[15]{and the components $\gamma_k(\tau)$ in \eqref{curveparams}  are arbitrary, since \eqref{ortho_alphabeta} ensures that \eqref{nullvelocity} is satisfied.~Moreover,}
\[
\frac{dS}{d\tau}=\alpha\cdot\dot{\gamma}=0,
\]
so {$S(\gamma(\tau))$} is constant and the cost function {$J(\gamma(\tau))$} remains constant. 
Thus, the {curves satisfying \eqref{nullvelocity}} are integral curves of $\mathcal{R}_\alpha$ and lie in hypersurfaces $J=\text{const}$.

\subsection{Interpreting the Hessian as a Metric in $x$-Coordinates}

{Now we compute} the Hessian with respect to the original coordinates $x_i$.
{Using $R(x)=\prod_{k=1}^n x_k^{\alpha_k}$ from \eqref{RR}, the reciprocal cost is written as}
\[
J(x)=\frac12(R+R^{-1})-1.
\]
Since
\[
\frac{\partial R}{\partial x_i}=\frac{\alpha_i}{x_i}R,
\]
we obtain
\[
\frac{\partial J}{\partial x_i}
=
\frac12(R-R^{-1})\frac{\alpha_i}{x_i}.
\]
Differentiating the last equation gives, for $i\neq j$,
\[
\frac{\partial^2 J}{\partial x_i\partial x_j}
=
\frac12\,\frac{\alpha_i\alpha_j}{x_i x_j}(R+R^{-1}),
\]
while on the diagonal one has
\[
\frac{\partial^2 J}{\partial x_i^2}
=
\frac12\,\frac{1}{x_i^2}
\left(
\alpha_i(\alpha_i-1)R
+
\alpha_i(\alpha_i+1)R^{-1}
\right).
\]Equivalently, by using $u_i=\alpha_i/x_i$ and $D=\mathrm{diag}(\alpha_i/x_i^2)$, we obtain 
\[
\nabla_x^2 J
=\frac12\,(R+R^{-1})\,u u^\top
-\frac12\,(R-R^{-1})\,D.
\] 
This representation shows that the Hessian is the sum of a rank-one
matrix $uu^\top$ and a diagonal matrix $D$. It is therefore invertible in general, but fails to be invertible under certain conditions. For instance, on the hypersurface $R=1$ (equivalently $J=0$), the diagonal term vanishes and therefore
\[
\nabla_x^2 J_{|R=1}=u u^\top,
\]
which is a matrix of rank one (provided $\alpha\neq0$).
In particular, at equilibrium $x_i=1$ \mbox{we obtain}
\[
\nabla_x^2 J(1,\dots,1)=\alpha\alpha^\top.
\]
Assume that $R\neq 1$ and $\alpha_i\neq0$ for all $i$. Then, the diagonal matrix
\[
D=\mathrm{diag}\!\left(\frac{\alpha_i}{x_i^2}\right)
\]
is invertible, {and invertible $A$ is given by}
\[
A:=-\frac12(R-R^{-1})D.
\]
In this case, the Hessian {is} written as
\[
\nabla_x^2J = A+\beta uu^\top,
\qquad
\beta:=\frac12(R+R^{-1}),
\qquad
u_i=\frac{\alpha_i}{x_i}.
\]
{In particular, $A$ is invertible when} $R\neq 1$ and all $\alpha_i\neq 0$, and from the determinant rule
\[
\det(A+\beta uu^\top)
=
\det(A)\bigl(1+\beta\,u^\top A^{-1}u\bigr),
\]
we obtain
\[
\det(\nabla_x^2J)
=
\det(A)\bigl(1+\beta\,u^\top A^{-1}u\bigr).
\]
Hence, under the restrictions  $R\neq 1$ and $\alpha_i\neq0$ for all $i$, the Hessian $\nabla_x^2J$ is singular if and only if
\[
1+\beta\,u^\top A^{-1}u=0.
\] This shows that the degeneracy occurs on a hypersurface
in {$(\mathbb{R}^{+})^n$.} 
We summarize the above consideration {in the next Proposition.}

\begin{Proposition}
Let $R(x)=\prod_{k=1}^n x_k^{\alpha_k}$, and assume $R\neq1$ and
$\alpha_i\neq0$ for all $i$. Then the Hessian $\nabla_x^2J$
is invertible except on the locus
\[
1+\beta\,u^\top A^{-1}u=0,
\]
where $A=-\frac12(R-R^{-1})D$, $\beta=\frac12(R+R^{-1})$,
$u_i=\alpha_i/x_i$, and $D=\mathrm{diag}(\alpha_i/x_i^2)$.
\end{Proposition}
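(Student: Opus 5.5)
The plan is to reduce the statement to the matrix determinant lemma applied to the decomposition of the Hessian derived just above. First, I will recompute $\nabla_x^2 J$ entrywise from $J=\tfrac12(R+R^{-1})-1$ with $\partial_i R=(\alpha_i/x_i)R$. This confirms the identity
\[
\nabla_x^2J=\beta\,uu^\top+A,\qquad A=-\tfrac12(R-R^{-1})D .
\]
The off-diagonal entries come only from the $uu^\top$ term. On the diagonal, the extra contribution $-\tfrac12(R-R^{-1})\alpha_i/x_i^2$ arises from differentiating the factor $1/x_i$ in $\partial_iJ$. The two formulas agree once we check that $\alpha_i(\alpha_i-1)R+\alpha_i(\alpha_i+1)R^{-1}=\alpha_i^2(R+R^{-1})-\alpha_i(R-R^{-1})$.

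Second, I will show that $A$ is invertible under the hypotheses. Since $x_i>0$, we have $\det D=\prod_i\alpha_i/x_i^2\neq0$ exactly when all $\alpha_i\neq0$. Since $R>0$, the scalar $R-R^{-1}$ vanishes only when $R=1$. Hence
\[
\det A=\bigl(-\tfrac12(R-R^{-1})\bigr)^n\det D\neq0
\]
whenever $R\neq1$ and all $\alpha_i\neq0$.

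Third, I will prove the rank-one determinant formula $\det(A+\beta uu^\top)=\det(A)\,(1+\beta u^\top A^{-1}u)$. The quickest route is to factor $A+\beta uu^\top=A(I+\beta A^{-1}uu^\top)$ and then use $\det(I+ab^\top)=1+b^\top a$. That last identity follows from the eigenvalues of $ab^\top$, which are $b^\top a$ and $n-1$ zeros. Combining this with $\det A\neq0$ gives $\det\nabla_x^2J=0$ if and only if $1+\beta u^\top A^{-1}u=0$. This is exactly the claim.

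There is no real obstacle here. The only step needing care is checking the diagonal-entry algebra in the first step, together with the sign convention in $A$. To make the singular locus explicit, I can also compute
\[
u^\top A^{-1}u=-\frac{2}{R-R^{-1}}\sum_i\frac{\alpha_i^2/x_i^2}{\alpha_i/x_i^2}=-\frac{2\sum_i\alpha_i}{R-R^{-1}},
\]
so the locus becomes $(R-R^{-1})=(R+R^{-1})\sum_i\alpha_i$, i.e. $\tanh(S)=\sum_i\alpha_i$ with $S=\log R$. This confirms that the locus is a hypersurface $R=\mathrm{const}$ when $0<|\sum_i\alpha_i|<1$, and is empty otherwise.
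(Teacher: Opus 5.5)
Your proposal is correct and follows the paper's argument: you decompose $\nabla_x^2J=A+\beta uu^\top$, note that $A$ is invertible when $R\neq1$ and all $\alpha_i\neq0$, and apply the matrix determinant lemma, while your evaluation $u^\top A^{-1}u=-2\sum_i\alpha_i/(R-R^{-1})$ reproduces the paper's subsequent reduction to $\tanh S=\sum_i\alpha_i$. Your remark that the locus is empty when $\sum_i\alpha_i=0$ is a small but valid sharpening of the paper's condition $|\sum_i\alpha_i|<1$, since the only solution $S=0$ (i.e.\ $R=1$) is excluded by hypothesis.
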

A direct computation gives
\[
1+\beta\,u^\top A^{-1}u
=
1-\frac{R+R^{-1}}{R-R^{-1}}\sum_{i=1}^n \alpha_i
=
1-{\coth(S(t))}\sum_{i=1}^n \alpha_i,
\]
where ${S(t)}$ is defined by \eqref{loge}.
Hence, the singular locus is determined by
\[
{\coth(S(t))}=\frac{1}{\sum_{i=1}^n \alpha_i}
\quad
\mbox{or} \quad
{\tanh(S(t))}=\sum_{i=1}^n \alpha_i,
\]
which has solutions if and only if
\[
\left|\sum_{i=1}^n \alpha_i\right|<1.
\]
Thus, the singular locus defines a proper hypersurface (whenever nonempty), and therefore the Hessian is generically nondegenerate.

Finally, the Hessian metric is given by
\begin{equation}
h_{ij}(x)
=
\begin{cases}
\displaystyle
\frac12\,\frac{\alpha_i\alpha_j}{x_i x_j}\,(R+R^{-1}),
& i\neq j,\\[12pt]
\displaystyle
\frac12\,\frac{1}{x_i^2}
\Big(
\alpha_i(\alpha_i-1)R
+
\alpha_i(\alpha_i+1)R^{-1}
\Big),
& i=j.
\end{cases}\label{generalhmet}
\end{equation}
Thus, $h_{ij}$ defines a Hessian metric which is generically nondegenerate, and hence \mbox{pseudo-Riemannian}. 

{For instance, in the case $n=2$, the signature is determined by the sign of $\det(h_{ij})$.   Starting from 
\[
R(x_1,x_2)=x_1^{\alpha_1}x_2^{\alpha_2},
\]
we obtain 
\[
\det(h_{ij})
=
h_{11}h_{22}-h_{12}^2,
\]
where
\[
h_{12}
=
\frac12\,\frac{\alpha_1\alpha_2}{x_1 x_2}(R+R^{-1}),
\]
\[
h_{11}
=
\frac12\,\frac{1}{x_1^2}
\Big(
\alpha_1(\alpha_1-1)R
+
\alpha_1(\alpha_1+1)R^{-1}
\Big),
\]
\[
h_{22}
=
\frac12\,\frac{1}{x_2^2}
\Big(
\alpha_2(\alpha_2-1)R
+
\alpha_2(\alpha_2+1)R^{-1}
\Big).
\]
Substituting into the determinant and simplifying, {we obtain}
\[
\det(h_{ij})
=
-\frac14\,\frac{\alpha_1\alpha_2}{{R^2}x_1^2 x_2^2}
\,(R^2-1)
\Big((\alpha_1+\alpha_2-1)R^2+\alpha_1+\alpha_2+1\Big).
\]
In particular, up to the positive factor $x_1^2 x_2^2$ in the denominator, $\det(h_{ij})$ depends on $(x_1,x_2)$ only through $R$.

In the canonical case $\alpha_1=\alpha_2=\tfrac12$, we have $R=\sqrt{x_1x_2}$, and 
\[
\det(h_{ij})
=
-\frac{1}{8x_1^3x_2^3}\,(x_1x_2-1), \quad x_1>0,\;x_2>0.
\]
Therefore,
\[
\det(h_{ij})<0 \quad \text{for } x_1x_2>1,
\qquad
\det(h_{ij})>0 \quad \text{for } x_1x_2<1.
\]
Thus, the metric is Lorentzian (signature $(1,1)$) on the region $x_1x_2>1$, positive definite on $x_1x_2<1$, and degenerate on the hypersurface $x_1x_2=1$.}

The degeneracy is not intrinsic to $J$, but depends on the chosen affine structure. We summarize the above results in the following theorem {omitting the proof.}
 
 \begin{Theorem}
Let $J$ be reciprocal cost \eqref{J-n} and assume $\alpha=(\alpha_1,\dots,\alpha_n)\neq 0$. Then:

\begin{itemize}
\item[(i)] In logarithmic coordinates, the Hessian satisfies
\[
\operatorname{rank}{(\nabla^2 J(t))}=1
\]
at every point.

\item[(ii)] {In the original $x$-coordinates, the Hessian $\nabla_x^2 J(x)$  is generically nondegenerate. More precisely, for fixed parameters $\alpha_i\neq 0$, it is invertible on the open set where $R\neq 1$ and
\[
1+\beta\,u^T A^{-1}u\neq 0.
\]

The singular locus determined by the condition
\[
1+\beta\,u^T A^{-1}u=0,
\]
which is equivalent to
\[
\tanh(S(x))=\sum_{i=1}^n \alpha_i,
\qquad
S(x)=\sum_{i=1}^n \alpha_i \log x_i,
\]
 exists if and only if
\[
\left|\sum_{i=1}^n \alpha_i\right|<1.
\]
}
\end{itemize}
\end{Theorem}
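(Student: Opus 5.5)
Part (i) follows from the chain rule in the $t$-coordinates. We write $J(t)=f(S(t))$ with $f(s)=\cosh s-1$ and $S(t)=\alpha\cdot t$ linear. This gives $\nabla^2J(t)=\cosh(S(t))\,\alpha\alpha^T$, as in the Remark following the computation in Section~\ref{General_geo}. A matrix $c\,\alpha\alpha^T$ with $c=\cosh(S)>0$ and $\alpha\neq0$ has image exactly $\operatorname{span}\{\alpha\}$ and kernel $\mathcal R_\alpha$. The image contains $\alpha$ because $\nabla^2 J\,\alpha=\cosh(S)|\alpha|^2\alpha\neq0$, so the rank is $1$ at every $t$.

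For part (ii) we use the decomposition already derived: $\nabla_x^2J=A+\beta uu^T$ with $A=-\tfrac12(R-R^{-1})D$, $\beta=\tfrac12(R+R^{-1})$, $u_i=\alpha_i/x_i$, $D=\operatorname{diag}(\alpha_i/x_i^2)$. On the open set where $R\neq1$ and all $\alpha_i\neq0$, the matrix $A$ is invertible. The matrix determinant lemma then gives $\det\nabla_x^2J=\det(A)\,(1+\beta u^TA^{-1}u)$, which is the content of the preceding Proposition. So invertibility on the stated open set is immediate, and the singular locus is exactly the zero set of $1+\beta u^TA^{-1}u$.

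Next we compute this factor explicitly. We have $A^{-1}=-\tfrac{2}{R-R^{-1}}\operatorname{diag}(x_i^2/\alpha_i)$, so $u^TA^{-1}u=-\tfrac{2}{R-R^{-1}}\sum_i \alpha_i$. Since $R=e^{S}$, it follows that
\[
1+\beta u^TA^{-1}u=1-\frac{R+R^{-1}}{R-R^{-1}}\sum_i\alpha_i=1-\coth(S)\sum_i\alpha_i .
\]
Because $S\neq0$ (as $R\neq1$), this vanishes if and only if $\tanh S=\sum_i\alpha_i$. Set $\sigma=\sum_i\alpha_i$. The map $\tanh$ is a bijection from $\mathbb R\setminus\{0\}$ onto $(-1,1)\setminus\{0\}$, so the equation has a solution $S\neq0$ precisely when $0<|\sigma|<1$. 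The map $x\mapsto S(x)$ from $(\mathbb R^+)^n$ onto $\mathbb R$ is surjective because $\alpha\neq0$. Each level set $\{S=\operatorname{artanh}\sigma\}$ is therefore a nonempty smooth hypersurface, and its complement is open and dense. This justifies the word ``generically''.

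The main obstacle is a boundary case in the ``if and only if'' for existence. When $\sigma=0$ we get $1-\coth(S)\cdot0=1\neq0$, so there is no singular point with $R\neq1$. The stated condition $|\sigma|<1$ must then be read as $0<|\sigma|<1$ on the region $R\neq1$. Alternatively, one can include the hypersurface $R=1$, where the Hessian reduces to the rank-one matrix $uu^T$ and is singular for $n\ge2$. I would handle this by treating the locus $\{R=1\}$ separately, using the fact established earlier that $\nabla_x^2J|_{R=1}=uu^T$. This way the statement ``singular locus exists iff $|\sigma|<1$'' is proved for the locus $\{\tanh S=\sigma\}$ within $R\ne1$, with the degenerate case $\sigma=0$ flagged explicitly.
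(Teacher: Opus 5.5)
Your proposal is correct and follows the same route as the computations this theorem summarizes: the rank-one form $\cosh(S)\,\alpha\alpha^T$ in $t$-coordinates, and in $x$-coordinates the splitting $A+\beta uu^T$, the matrix determinant lemma, and the reduction to $1-\coth(S)\sum_i\alpha_i$. Your caveat for $\sum_i\alpha_i=0$ is a valid refinement the paper misses: the factor then equals $1$ on $R\neq1$, so within $R\neq1$ the existence criterion is really $0<|\sum_i\alpha_i|<1$, unless the rank-one hypersurface $R=1$ (singular for $n\ge2$) is counted as the locus $\tanh S=0$.
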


 \subsection{Projective Equivalence}

{In the subsection, we} analyze the projective relation between the affine connections of $M_t$ and $M_x$. {
We consider the Hessian manifold $M=(\mathbb{R}^+)^n$.

The manifold $M_t$ corresponds to the flat affine connection for which the logarithmic coordinates $t_i$ are affine, and the metric $g$ in these coordinates is given by \eqref{tmet}. 

The manifold $M_x$ corresponds to the flat affine connection for which the original coordinates $x_i$ are affine, and the metric $h$ is given by \eqref{generalhmet}.} Geodesic mapping and its generalizations were investigated by many authors, for example J. Mike\v s  in~\cite{Mikes2019}, N. S. Sinyukov in~\cite{Sinyukov1979}, L.S. Velimirovi\' c and M.S. Stankovi\' c in~\cite{MS} and many others~\cite{Zlatanovic2012}.

\begin{Proposition} {Let $M=(\mathbb{R}^+)^n$ be the  manifold,} and let $M_t$ and $M_x$ denote the affine structures corresponding to the coordinates $t_i=\log x_i$ and $x_i$, respectively.

For $n\ge 2$, the affine connections of $M_t$ and $M_x$ are not projectively equivalent.
\end{Proposition}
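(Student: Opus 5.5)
The strategy is to compute the difference tensor between the two flat connections in one coordinate system and check it against Weyl's criterion for projective equivalence. Let $\nabla^x$ be the flat connection for which the $x_i$ are affine, and $\nabla^t$ the flat connection for which the $t_i=\log x_i$ are affine. We work in the $x$-coordinates. There all Christoffel symbols of $\nabla^x$ vanish. For $\nabla^t$ we use the transformation rule: its geodesics are the curves with $t_i$ affine in the parameter, so $\log x_i(s)=a_i s+b_i$. Differentiating twice gives $\ddot x_i=\dot x_i^2/x_i$. Hence the only nonzero Christoffel symbols of $\nabla^t$ in $x$-coordinates are
\[
\Gamma^i_{ii}=-\frac{1}{x_i},\qquad i=1,\dots,n .
\]
So the difference tensor $T^i_{jk}=\Gamma^{t,i}_{jk}-\Gamma^{x,i}_{jk}$ is $T^i_{jk}=-\delta^i_j\delta^i_k/x_i$.

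Next we invoke the classical criterion for two torsion-free connections. They are projectively equivalent, i.e.\ have the same unparametrized geodesics, if and only if $T^i_{jk}=\delta^i_j\psi_k+\delta^i_k\psi_j$ for some $1$-form $\psi$. We assume this form and derive a contradiction for $n\ge2$. Choose $i\neq j$ and set $k=j$. The left side is $T^i_{jj}=0$ and the right side is $\delta^i_j\psi_j+\delta^i_j\psi_j=0$, so this gives no information. Instead take $i=k\neq j$. Then $T^i_{ji}=0$, while the right side is $\delta^i_j\psi_i+\delta^i_i\psi_j=\psi_j$. Hence $\psi_j=0$ for every $j$ (each $j$ has some $i\neq j$ because $n\ge2$). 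But then $T\equiv0$, which contradicts $T^i_{ii}=-1/x_i\neq0$. Therefore no such $\psi$ exists.

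As an independent geometric check, we can exhibit an explicit geodesic of one connection that is not a reparametrized geodesic of the other. In $x$-coordinates the $\nabla^t$-geodesics are $x_i(s)=c_ie^{a_is}$. Taking $n=2$, $a=(1,2)$, $c=(1,1)$ gives the curve $x_2=x_1^2$. This is not contained in any straight line, so it is not a $\nabla^x$-geodesic even after reparametrization. For general $n$, we embed this curve in two coordinates and hold the others constant. This concrete example avoids any doubt about the exact form of the criterion.

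The main obstacle is conceptual rather than computational: being precise about what "projectively equivalent" means here. We take it to mean equality of unparametrized geodesics of the two flat affine connections on $M$, not a statement about the Hessian metrics $g$ and $h$. The Levi-Civita connection of $g$ does not exist, so a metric interpretation would not even make sense for $M_t$. We state this interpretation explicitly at the start of the proof. We then rely mainly on the explicit curve $x_2=x_1^2$, which settles the claim for all $n\ge2$ directly.
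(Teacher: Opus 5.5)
Your proof is correct. The tensor computation is the paper's argument with the two steps in the opposite order. The paper uses ${}_t\Gamma^i_{ii}=-1/x_i$ and the criterion ${}_t\Gamma^i_{jk}=\delta^i_j\psi_k+\delta^i_k\psi_j$, first reads off $\psi_i=-1/(2x_i)$ from $i=j=k$, and then gets $\psi_l=0$ from the vanishing mixed component ${}_t\Gamma^i_{il}$. You first force $\psi\equiv 0$ from the mixed components $T^i_{ji}=0$ and then contradict the diagonal.

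Your explicit curve is a genuinely different route. It works directly from the definition of projective equivalence as having the same unparametrized geodesics. It needs neither Weyl's criterion nor the Christoffel symbols, only that the $\nabla^t$-geodesics are $x_i=c_ie^{a_is}$ and the $\nabla^x$-geodesics are straight segments. Since $c$ can be any point, every short arc of $x_2=c_2(x_1/c_1)^2$ is non-collinear, so this argument also shows failure on every open subset.

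The tensor route gives somewhat more information. It produces the explicit difference tensor and shows that the equations for $\psi$ have no solution at each single point, not merely along some curve.
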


\begin{proof}
In the $x$-coordinates, the affine connection of $M_x$ is trivial:
\[
{}_x\Gamma^i_{jk}=0.
\]
On the other hand, the affine connection of $M_t$, expressed in the $x$-coordinates,
has the nonzero components
\[
{}_t\Gamma^i_{ii}=-\frac{1}{x_i},
\qquad i=1,\dots,n,
\]
while all other components vanish.

Recall that two torsion-free connections are projectively equivalent if and only if
there exists a $1$-form $\psi=\psi_k\,dx^k$ such that
\[
{}_t\Gamma^i_{jk}-{}_x\Gamma^i_{jk}
=
\delta^i_j\psi_k+\delta^i_k\psi_j.
\]
Since ${}_x\Gamma^i_{jk}=0$, this becomes
\[
{}_t\Gamma^i_{jk}
=
\delta^i_j\psi_k+\delta^i_k\psi_j.
\]
Taking $i=j=k$, we obtain
\[
-\frac{1}{x_i}={}_t\Gamma^i_{ii}=2\psi_i,
\]
hence
\[
\psi_i=-\frac{1}{2x_i}
\quad \text{{for all} }\; i=1,\dots,n.
\]
{Fix an index $i\in\{1,\dots,n\}$ and choose $l\neq i$.} Since the mixed components of ${}_t\Gamma$ vanish, taking $j=i$ and $k=l$ we obtain
\[
0={}_t\Gamma^i_{il}
=
\delta^i_i\psi_l+\delta^i_l\psi_i
=
\psi_l.
\]
Thus,
\[
\psi_l=0,
\]
which contradicts
\[
\psi_l=-\frac{1}{2x_l}.
\]
Therefore, for $n\ge 2$, the two affine connections are not projectively equivalent.
\end{proof}

%\subsection{Conformal equivalence}

%Let $M_t$ and $M_x$ be the manifolds determined by the coordinates $t_i=\log x_i$ and $x_i$, respectively, and let $g_{ij}$ and $h_{ij}$ be the corresponding Hessian metrics. 

%For $n\ge 2$, the metrics $g$ and $h$ are not conformally equivalent, since $g$ has rank one while $h$ is generically of full rank on an open set. For $n=1$, the metrics are trivially conformally equivalent.

 \section{Comparative Analysis Between the Two-Dimensional \boldmath{$M_x$} and \mbox{\boldmath{$M_t$} Manifolds}}\label{2D_geo}

 In this section, we consider the case $n=2${,}  use notation $(s,t)=(t_1,t_2)$, $(x,y)=(x_1,x_2)${,} and $(a,b)=(\alpha_1,\alpha_2)$. 
The purpose is to make the general structure explicit in dimension $n=2$, where singular sets, curvature, and geodesics can be written explicitly.
 
\subsection{The Two-Dimensional $M_x$} 
According to the notation aforementioned above, we have
\begin{equation}
    J(x,y)=\frac{1}{2}(x^a y^b+x^{-a}y^{-b})-1,
\end{equation}
and the Hessian of $J$ with respect to the $x,y$ coordinates is
\begin{align}
H=\left(
\begin{array}{cc}
 \frac{a \left((a+1) x^{-a} y^{-b}+(a-1) x^a y^b\right)}{2 x^2}
   & \frac{a b \left(x^{-a} y^{-b}+x^a y^b\right)}{2 x y} \\
 \frac{a b \left(x^{-a} y^{-b}+x^a y^b\right)}{2 x y} & \frac{b
   \left((b+1) x^{-a} y^{-b}+(b-1) x^a y^b\right)}{2 y^2} \\
\end{array}
\right)\label{generalHx2D}
\end{align}
in general. The zero-cost condition for nonzero $a,b$ can be written $y=x^{-a/b}$, so  on the zero-cost hypersurface, we get
\begin{align}
 H=   \left(
\begin{array}{cc}
 \frac{a^2}{x^2} & a b x^{\frac{a}{b}-1} \\
 a b x^{\frac{a}{b}-1} & b^2 x^{\frac{2 a}{b}} \\
\end{array}
\right).\label{J0Hx2D}
\end{align}

At the point $x=y=1$, we get 
\begin{align}
 H=   \left(
\begin{array}{cc}
 a^2 & a b  \\
 a b  & b^2  \\
\end{array}
\right).\label{xy1Hx2D}
\end{align}

The determinant of \eqref{J0Hx2D} is zero, so the matrix is not invertible on the zero-cost hypersurface. 
Since the point $x=y=1$ belongs to the zero-cost hypersurface, the matrix \eqref{xy1Hx2D} is also not invertible.

On the other hand, the determinant of \eqref{generalHx2D} is
\[
-\frac{1}{4} a b x^{-2 (a+1)} y^{-2 (b+1)} \left(x^{2 a} y^{2
   b}-1\right) \left(a x^{2 a} y^{2 b}+b x^{2 a} y^{2 b}-x^{2
   a} y^{2 b}+a+b+1\right),
\]
which is not identically zero. Therefore, away from the zero-cost hypersurface and the additional singular locus, $H$ is invertible and defines a metric.

\subsection{The Two-Dimensional $M_t$}

For comparison, let
\(
s=\log x, t=\log y.
\)
Then,
\begin{align}
    J(s,t)=\cosh(as+bt)-1.
\end{align}
Hence,
\begin{align}
  \partial_i \partial_j J
 =   \left(
\begin{array}{cc}
 a^2\cosh(as+bt) & ab\cosh(as+bt)  \\
 ab\cosh(as+bt)  & b^2\cosh(as+bt)
\end{array}
\right)\label{2dT}
\end{align}
Matching the general $n$-dimensional case in logarithmic coordinates, the Hessian matrix of $J$ with respect to $(s,t)$ coordinates  \eqref{2dT} has rank one:
\[
g_{ij}=\cosh(as+bt)\,\begin{pmatrix}a\\ b\end{pmatrix}
\begin{pmatrix}a & b\end{pmatrix}.
\]
Hence, its kernel is one-dimensional and generated by the vector $(b,-a)$. Thus, in dimension two, the Hessian has rank one, and its kernel is one-dimensional.

\subsection{Two Affine Structures}

Let $X=(x,y)$ and $T=(s,t)$. We use hatted indices for $(x,y)$ coordinates.  Let us denote by $M_t$ the Hessian manifold given by the affine coordinates $(s,t)$, with metric $g_{ij}$ from \eqref{2dT}, and by $M_x$ as the Hessian manifold given by the affine coordinates $(x,y)$, with metric $h_{\hat{i}\hat{j}}$ from \eqref{generalHx2D}.

Although the variables are related by $x=e^s$, $y=e^t$, the Hessian construction depends on the chosen affine structure.  Therefore, $g$ and $h$ are not the same Hessian tensor written in different coordinates, but arise from two different flat connections. The Jacobian factors are
\begin{align}
&\frac{\partial x}{\partial s}=x,\qquad \frac{\partial x}{\partial t}=0,\qquad \frac{\partial y}{\partial s}=0,\qquad \frac{\partial y}{\partial t}=y,\\
&\frac{\partial s}{\partial x}=\frac1x,\qquad \frac{\partial t}{\partial x}=0,\qquad \frac{\partial s}{\partial y}=0,\qquad \frac{\partial t}{\partial y}=\frac1y.
\end{align}
Transforming $g$ to $(x,y)$, we get
\begin{align}
g_{\hat{i}\hat{j}}
=\frac{\partial T^i}{\partial X^{\hat{i}}}\frac{\partial T^j}{\partial X^{\hat{j}}}g_{ij}
=\left(
\begin{array}{cc}
 \frac{a^2 \left(x^{-a} y^{-b}+x^a y^b\right)}{2 x^2} & \frac{ab \left(x^{-a} y^{-b}+x^a y^b\right)}{2 x y} \\
 \frac{ab \left(x^{-a} y^{-b}+x^a y^b\right)}{2 x y} &
   \frac{b^2 \left(x^{-a} y^{-b}+x^a y^b\right)}{2 y^2}
\end{array}
\right).
\end{align}
This is not equal to $h_{\hat{i}\hat{j}}$. 
Similarly,

\begin{equation}
\begin{aligned}
h_{ij}
&=\frac{\partial X^{\hat{i}}}{\partial T^i}\frac{\partial X^{\hat{j}}}{\partial T^j}h_{\hat{i}\hat{j}}
\\&=\left(
\begin{array}{cc}
 a (a \cosh (a s+b t)-\sinh (a s+b t)) & ab \cosh (a s+b t) \\
 ab \cosh (a s+b t) & b (b \cosh (a s+b t)-\sinh (a s+b t))
\end{array}
\right),\label{hmettcoords}
\end{aligned}\end{equation}
which is not equal to \eqref{2dT}.

\subsection{Transformation of the Connections}

\textls[-15]{The difference $g_{ij}\neq h_{ij}$ arises because the two Hessians use different affine~connections.} 

The formula
\(
g_{ij}=\partial_i\partial_j J
\)
is not invariant under nonlinear coordinate change. The invariant form is
\[
g_{ij}=\nabla_i\nabla_j J,
\]
where $\nabla$ is the flat affine connection of the chosen coordinates. For $M_t$, we have
\[
{}_t\Gamma^i_{jk}=0 \quad \text{in } (s,t),
\]
and for $M_x$
\[
{}_x\Gamma^{\hat{i}}_{\hat{j}\hat{k}}=0 \quad \text{in } (x,y).
\]
Transforming connections, we obtain for $M_t$ in $(x,y)$:
\begin{align}
{}_t\Gamma^x_{xx}=-\frac1x,\qquad {}_t\Gamma^y_{yy}=-\frac1y,
\end{align}
all other components zero.

For $M_x$ in $(s,t)$ {coordinates}:
\begin{align}
{}_x\Gamma^s_{ss}=1,\qquad {}_x\Gamma^t_{tt}=1,
\end{align}
{and} all other components {are} zero.
Using the transformed connection coefficients and covariant {derivatives, we obtain}
\[
{}_t\nabla_{\hat{i}}\,{}_t\nabla_{\hat{j}}J=g_{\hat{i}\hat{j}},
\qquad
{}_x\nabla_i\,{}_x\nabla_jJ=h_{ij}.
\]
This shows that the geometric behavior induced by $J$ depends on the chosen affine structure.

\subsection{Levi-Civita Connection and Geodesics in $x$-Coordinates}

Following the notation of \eqref{generalHx2D}, let $x_1=x$, $x_2=y$, $\alpha_1=a$, $\alpha_2=b$, and define
\[
R(x,y)=x^a y^b,\qquad Z=R^2=x^{2a}y^{2b}.
\]
{The metric $h_{\hat{i}\hat{j}}=\partial_{\hat{i}}\partial_{\hat{j}}J$ is given by \eqref{generalHx2D}, where the hatted indices refer to the original $(x,y)$-coordinates, and}
\[
J(x,y)=\frac12\bigl(R+R^{-1}\bigr)-1.
\]
Introduce the denominator factor
\[
\Delta=(Z-1)\bigl((a+b-1)Z+(a+b+1)\bigr).
\]
Note that $\Delta=0$ on the hypersurface $R=1$ (i.e.\ $Z=1$), and also on the additional locus
\[
(a+b-1)Z+(a+b+1)=0,
\]
where the metric is not invertible. Thus, the formulas below are valid on the set
\[
\mathcal{D}=\{(x,y)\in(0,\infty)^2:\Delta\neq 0,\;a,b\neq 0\}.
\]
Since $Z>0$, the equation
\(
(a+b-1)Z + (a+b+1)=0
\) has a positive solution
if 
\[
 Z=-\frac{a+b+1}{a+b-1}>0.
\]
It follows that
\[
(a+b+1)(a+b-1) < 0,
\]
that is, when $|a+b|<1$.

The non-zero Christoffel symbols of the Levi-Civita connection $\Gamma^{k}_{ij}=\Gamma^{k}_{ji}$ are given by
\begin{align*}
\Gamma^{x}_{xx} &=
\frac{Z^2 a^2+2Z^2 ab-3Z^2 a-2Z^2 b+2Z^2-2Za^2+4Zab-4Z+a^2+2ab+3a+2b+2}{2x\,\Delta},\\
\Gamma^{x}_{xy}&=\Gamma^{x}_{yx} =
-\frac{b\bigl(-Z^2 b+Z^2+4Za-2Zb-b-1\bigr)}{2y\,\Delta},\\
\Gamma^{x}_{yy} &=
-\frac{bx\bigl(Z^2 b-Z^2+6Zb+b+1\bigr)}{2y^2\,\Delta},\\
\Gamma^{y}_{xx} &=
-\frac{ay\bigl(Z^2 a-Z^2+6Za+a+1\bigr)}{2x^2\,\Delta},\\
\Gamma^{y}_{xy}&=\Gamma^{y}_{yx} =
\frac{a\bigl(Z^2 a-Z^2+2Za-4Zb+a+1\bigr)}{2x\,\Delta},\\
\Gamma^{y}_{yy} &=
\frac{2Z^2 ab-2Z^2 a+Z^2 b^2-3Z^2 b+2Z^2+4Zab-2Zb^2-4Z+2ab+2a+b^2+3b+2}{2y\,\Delta}.
\end{align*} 

These coefficients ({The} %MDPI: %MDPI: Footnotes are not allowed in this journal. We moved contents into paragraph. Please confirm. %MZ it is okay
 Christoffel symbols and Ricci scalar here and in Section~\ref{LCgeo} are verified using Wolfram Mathematica.)
 show that the Levi-Civita connection is singular on the hypersurface $\Delta=0$, which includes the zero-cost hypersurface $R=1$.

We now compute the curvature of this connection. Since the manifold is two-dimensional, there is only one independent curvature invariant. {
The Ricci scalar of the Levi-Civita connection of $h$ (see, e.g.,~\cite{doCarmo1992}, {Chapter 4}) is defined by
\(
\mathrm{Ric} = h^{ij} R_{ij}\), where $R_{ij}$ denotes the Ricci tensor. 
In our coordinates, it takes the form}
\[
\mathrm{Ric}
=
\frac{4 (a+b) Z^{3/2} \left((a+b-2)Z +a+b+2\right)}
{(Z-1)^2\left((a+b-1)Z +a+b+1\right)^2}.
\]
This expression shows that the curvature vanishes when $a+b=0$, and diverges on the loci where the denominator vanishes, {in particular, it is at} $Z=1$, that is on the hypersurface $R=1$. See Figure \ref{fig:Ricciscalar}.
\begin{figure}[H]
    %\centering
    \includegraphics[width=0.6\linewidth]{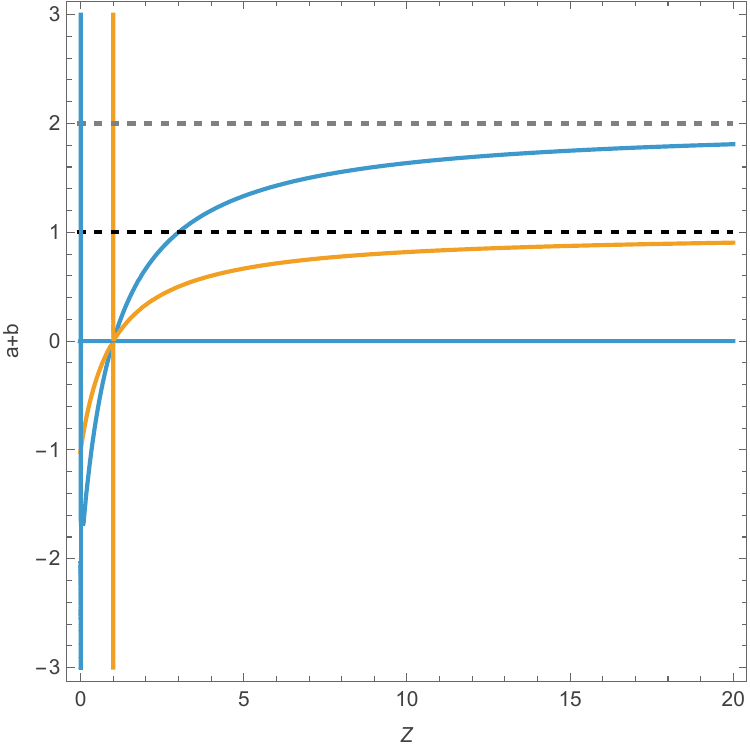}
    \caption{
   Ricci scalar divergences on the orange curves and vanishes on the blue curves. The dashed lines $a+b=2$ and $a+b=1$ represent asymptotes.}
    \label{fig:Ricciscalar}
\end{figure}

Both manifolds $M_t$ and $M_x$, equipped with their natural flat affine connections, have identically zero Riemann curvature. However, when the Levi-Civita connection is introduced on $M_x$, curvature appears.

\textls[-15]{This reflects the fact that the Hessian construction depends on the chosen affine~structure.}

\subsection{Geodesics}

We distinguish three types of curves associated with the two affine structures
introduced above: affine geodesics on $M_t$, affine geodesics on $M_x$, and
Levi-Civita geodesics associated with the metric induced on $M_x$.

\subsubsection{Affine Geodesics on  $M_t$}

On the manifold $M_t$, the affine connection is trivial in the affine coordinates $(s,t)$, {so}
\(
{}_t\Gamma^i_{jk}=0.
\)
Hence, the affine geodesics are straight lines,
\begin{align*}
s(\lambda)=s_0+s_1\lambda,\quad
t(\lambda)=t_0+t_1\lambda.
\end{align*}
Passing to $x$-coordinates via $x=e^s$, $y=e^t$, these become
\begin{align*}
x(\lambda)=x_0 e^{s_1\lambda},\quad
y(\lambda)=y_0 e^{t_1\lambda}.
\end{align*}
These curves are defined for all $\lambda\in\mathbb{R}$ and are the images of affine geodesics on $M_t$ under the transformation $x=e^s$, $y=e^t$.  Higher-dimensional versions of $M_t$ will also have straight line geodesics in logarithmic coordinates with respect to the intrinsic affine connection.

\subsubsection{Levi-Civita Geodesics on  $M_x$}\label{LCgeo}

On the manifold $M_x$, we consider the Levi-Civita connection of the metric
$h_{\hat i\hat j}$ induced by the Hessian structure. The geodesics satisfy
\[
\frac{d^2 x^i}{d\lambda^2} + \Gamma^i_{jk}(x)\,\frac{dx^j}{d\lambda}\frac{dx^k}{d\lambda}=0.
\]

The Levi-Civita connection is defined only on the open set where the metric is nondegenerate,
i.e., where $\Delta\neq 0$. The corresponding geodesic equations become singular on the degeneracy set $\Delta=0$.
 
Explicitly, the equations are
\vspace{-12pt}
\begin{adjustwidth}{-\extralength}{0cm}
\centering %% If there is a figure in wide page, please release command \centering, for Table, ``\textwidth" should be ``\fulllength"
\begin{equation}\label{xgeo}
\begin{aligned}
&\frac{d^2 x}{d\lambda^2}
= -\frac{(a+1)(a+2b+2) - 2Z(a^2 - 2ab + 2) + (a-1)Z^2(a+2b-2)}{2\Delta\, x}
\left(\frac{dx}{d\lambda}\right)^2 \\
&- \frac{b\bigl(2Z(b-2a) + (b-1)Z^2 + b + 1\bigr)}{\Delta\, y}
\frac{dx}{d\lambda}\frac{dy}{d\lambda} + \frac{b x \bigl((b-1)Z^2 + 6bZ + b + 1\bigr)}{2\Delta\, y^2}
\left(\frac{dy}{d\lambda}\right)^2,
\end{aligned}
\end{equation}
\end{adjustwidth}
\vspace{-12pt}
\begin{adjustwidth}{-\extralength}{0cm}
\centering %% If there is a figure in wide page, please release command \centering, for Table, ``\textwidth" should be ``\fulllength"
\begin{equation}\label{ygeo}
\begin{aligned}
&\frac{d^2 y}{d\lambda^2}
= -\frac{(b+1)(2a+b+2) - 2Z(-2ab + b^2 + 2) + (b-1)Z^2(2a+b-2)}{2\Delta\, y}
\left(\frac{dy}{d\lambda}\right)^2 \\
& - \frac{a\bigl(2Z(a-2b) + (a-1)Z^2 + a + 1\bigr)}{\Delta\, x}
\frac{dx}{d\lambda}\frac{dy}{d\lambda}  + \frac{a y \bigl((a-1)Z^2 + 6aZ + a + 1\bigr)}{2\Delta\, x^2}
\left(\frac{dx}{d\lambda}\right)^2,
\end{aligned}
\end{equation} 
\end{adjustwidth}
where $\Delta$ and $Z$ are defined as above.

Note the symmetry under simultaneously swapping $a$ and $b$, and $x$ and $y$. A point on the zero-cost hypersurface can be written as
\begin{equation}
    x=z_0^{1/a}, \quad y=z_0^{-1/b},
\end{equation}
for some $z_0>0$ and nonzero $a,b$. The geodesic equations are singular on the $J=0$ hypersurface, since $\Delta=0$ there (equivalently, $Z=1$).

We consider a formal expansion near $\Delta=0$ and extract only leading-order constraints. Suppose a formal solution is of the form
\begin{align}
    x(\lambda)=z_0^{1/a}+\sum_{n=1}^{\infty} x_n(\lambda-k)^n,\qquad
    y(\lambda)=z_0^{-1/b}+\sum_{n=1}^{\infty} y_n(\lambda-k)^n,
    \label{supposedseries}
\end{align}
{ and exists} near $\lambda=k$, so that the curve extends away from the $J=0$ hypersurface and the partial derivatives
\[
\left.\frac{dx}{d\lambda}\right|_{\lambda=k}=x_1,\qquad
\left.\frac{dy}{d\lambda}\right|_{\lambda=k}=y_1
\]
are finite.

After moving all terms in \eqref{xgeo} and \eqref{ygeo} to the left-hand side and multiplying by $\Delta$, the first equation takes  the form
\[
\Delta\frac{d^2x}{d\lambda^2}
+A\left(\frac{dx}{d\lambda}\right)^2
+B\left(\frac{dy}{d\lambda}\right)^2
+C\frac{dx}{d\lambda}\frac{dy}{d\lambda}=0,
\]
with an analogous expression for the $y$-equation. Substituting \eqref{supposedseries}, and expanding near $\lambda=k$ {give} the constraints
\begin{equation}
    4 b z_0^{-1/a} \left(x_1-y_1 z_0^{\frac{1}{a}+\frac{1}{b}}\right)
   \left(b y_1 z_0^{\frac{1}{a}+\frac{1}{b}}+a x_1\right)=0
\end{equation}
 from \eqref{xgeo}, and
\begin{equation}
    4 a z_0^{-\frac{2}{a}-\frac{1}{b}} \left(y_1
   z_0^{\frac{1}{a}+\frac{1}{b}}-x_1\right) \left(b y_1
   z_0^{\frac{1}{a}+\frac{1}{b}}+a x_1\right)=0
\end{equation}
 from \eqref{ygeo}, respectively, for $a,b\neq 0$. Both equations give the same two possibilities:
\[
x_1 = y_1 z_0^{\frac{1}{a}+\frac{1}{b}}, 
\qquad 
x_1 = -\frac{b}{a} y_1 z_0^{\frac{1}{a}+\frac{1}{b}},
\]
since $z_0>0$. These give the possible tangent directions near the zero-cost hypersurface compatible with obeying \eqref{supposedseries}.
{Since the geodesics are singular on the hypersurface $\Delta=0$, these conditions provide only necessary constraints on the tangent directions. In general, they do not guarantee the existence of a regular solution extending across the singular set. 
Therefore, no  matching conditions arise for Levi-Civita geodesics.}

We also consider the Levi-Civita structure on $M_x$ in $(s,t)$-coordinates, using \eqref{hmettcoords}. In these coordinates, the connection coefficients are
\begin{align}
    \Gamma^s_{ss}&=\frac{a \left(2 b \coth ^2(a s+b t)-\coth (a s+b t)+a\right)}{2 (a+b) \coth (a s+b t)-2},\\
    \Gamma^s_{st}&=\Gamma^s_{ts}=\frac{b \left((b-a) \coth ^2(a s+b t)-\coth (a s+b t)+a\right)}{2 (a+b) \coth (a s+b t)-2},\\
    \Gamma^s_{tt}&=-\frac{b\; \text{csch}^2(a s+b t) (3b-\sinh (2 (a s+b t))+b \cosh (2 (a s+b t)))}{4 (a+b) \coth (a s+b t)-4},\\
    \Gamma^t_{ss}&=-\frac{a\; \text{csch}^2(a s+b t) (3a-\sinh (2 (a s+b t))+a \cosh (2 (a s+b t)))}{4 (a+b) \coth (a s+b t)-4},\\
    \Gamma^t_{ts}&=\Gamma^t_{st}=\frac{a \left((a-b) \coth ^2(a s+b t)-\coth (a s+b t)+b\right)}{2 (a+b) \coth (a s+b t)-2},\\
    \Gamma^t_{tt}&=\frac{b \left(2 a \coth ^2(a s+b t)-\coth (a s+b t)+b\right)}{2 (a+b) \coth (a s+b t)-2}.
\end{align}
The coefficients depend only on the combination
\begin{equation}
    S= as + bt=q,\label{qdefined}
\end{equation}
which is analogous to $Z = x^{2a}y^{2b}$ in the $(x,y)$-coordinates. Define
\begin{align}
    r = -bs + at.\label{rdefined}
\end{align}
Then, $q$ is the linear combination controlling the logarithmic Hessian, while $r$ parametrizes the transverse direction.
{The Ricci scalar of the Levi-Civita connection of $h$ in the $(s,t)$-coordinates is given by}
\begin{equation}\label{ric}
\mathrm{Ric}(q)=
\frac{(a+b)\bigl((a+b)\coth q-2\bigr)\text{csch}^3 q}
{2\bigl((a+b)\coth q-1\bigr)^2}.
\end{equation}
The previous equation shows that the Ricci scalar depends only on $q=as+bt$, and is independent of  $r$. Using the relations
\[
s=\frac{aq-br}{a^2+b^2},\qquad
t=\frac{bq+ar}{a^2+b^2},
\]
the geodesic equations can be written in the $(q,r)$-coordinates as
\begin{equation}\label{qgeoqr}
\begin{aligned}
&2(a^2+b^2)^2 q''\bigl((a+b)-\tanh(q)\bigr)-
ab\big(2(a^2-b^2)q'\, r'
+(a+b)^2\,(r')^2)\\&
+(q')^2\Bigl((a+b)(a^2+b^2)^2\tanh(q)
-\bigl(a^4-a^3b+4a^2b^2-ab^3+b^4\bigr)\Bigr)=0,
\end{aligned}
\end{equation}
{and}
\begin{equation}\label{rgeoqr}
\begin{aligned}
&2(a^2+b^2)^2 r''\bigl((a+b)-\tanh(q)\bigr)
+2(a+b)q'\, r' \Bigl((a^2+b^2)^2\coth(q) 
-(a^3+b^3)\Bigr)\\&
-(a-b)(q')^2\frac{\text{csch}(q)}{2\cosh(q)}
\Bigl( (a^2+b^2)^2(\cosh(2q)+3)-(a^3+b^3)\sinh(2q)\Bigr)\\& 
+ab(a^2-b^2)\,(r')^2=0.
\end{aligned}
\end{equation}
All coefficients depend only on $q$, but the equations are coupled. In general, the condition $r'=0$ is not preserved, except in the symmetric case $a=b$.

We now look for a formal solution near the zero-cost hypersurface, i.e., near $q=0$, in the form
\begin{align}
q(\lambda)=\sum_{n=1}^{\infty} q_n(\lambda-k)^n,\qquad
r(\lambda)=\sum_{n=0}^{\infty} r_n(\lambda-k)^n.
\label{supposedseries2}
\end{align}
The lowest order constraint necessary for such a solution to exist is
\begin{equation}
r_1=\frac{a-b}{a+b}\,q_1.
\end{equation}

\subsubsection{Affine Geodesics on $M_x$}

On $M_x$ manifold, the intrinsic affine connection is trivial in the $(x,y)$-coordinates, i.e.
\(
{}_x\Gamma^i_{jk}=0.
\)
Hence, the affine geodesics are straight {lines:}
\begin{align*}
x(\lambda)=X_1\lambda+X_0,\quad
y(\lambda)=Y_1\lambda+Y_0.
\end{align*}
Passing to logarithmic {coordinates:}
\[
s=\log x,\qquad t=\log y,
\]
these curves satisfy
\begin{align*}
\frac{d^2 s}{d\lambda^2}+\left(\frac{ds}{d\lambda}\right)^2=0,\quad
\frac{d^2 t}{d\lambda^2}+\left(\frac{dt}{d\lambda}\right)^2=0.
\end{align*}
The solutions are given by
\begin{align*}
s(\lambda)=\log(\lambda+S_1)+S_0,\quad
t(\lambda)=\log(\lambda+T_1)+T_0,
\end{align*}
for suitable constants $S_0,S_1,T_0,T_1$, related to $X_0,X_1,Y_0,Y_1$ by
\[
S_0=\log X_1,\qquad S_1=\frac{X_0}{X_1},\qquad
T_0=\log Y_1,\qquad T_1=\frac{Y_0}{Y_1}.
\]
Since $x(\lambda)>0$ and $y(\lambda)>0$, the parameter $\lambda$ is restricted to an interval where
\[
\lambda+S_1>0,\qquad \lambda+T_1>0.
\]
This restriction comes from the domain $(0,\infty)^2$. The affine connection is regular, and the geodesics reach the boundary $x=0$ or $y=0$. For higher-dimensional versions of $M_x$,  geodesics with respect to the intrinsic affine connection  are straight lines in positive ratio coordinates, with a corresponding restriction on the affine parameter.

Thus, the same function $J$ induces three distinct families of curves: the affine geodesics of $M_t$, the affine geodesics of $M_x$, and the Levi-Civita geodesics on $M_x$.

\subsection{Gradient Paths}

We consider the Euclidean gradient flow of the cost
function in the affine logarithmic~coordinates
\begin{equation}
\frac{dt}{d\tau}=\nabla J=\alpha \sinh\!\left(\sum_{i=1}^n \alpha_i t_i\right).\label{gradpath}
\end{equation}
Notice that these paths are defined independently of the Hessian metric or the choice of affine connection, although here the gradient is expressed in $t$-coordinates.
We record both ascent and descent flows: along ascent $J$ is increasing, while along descent $J\to 0$.

In the $2$-dimensional case, writing $(t_1,t_2)=(s,t)$ and $\alpha=(a,b)$, we obtain
\[
\frac{ds}{d\tau}=a \sinh(as+bt), \qquad 
\frac{dt}{d\tau}=b \sinh(as+bt).
\]
Using the combinations $q$ and $r$ from Equations \eqref{qdefined} and \eqref{rdefined}, we  get
\begin{align}
    \frac{dq}{d\tau}&=a\frac{ds}{d\tau}+b\frac{dt}{d\tau}
    =(a^2+b^2)\sinh(q)=|\alpha|^2\sinh(q),\label{dqdt}\\
    \frac{dr}{d\tau}&=-b\frac{ds}{d\tau}+a\frac{dt}{d\tau}
    =(-ba+ab)\sinh(q)=0.
\end{align}From the last two equations, we obtain
\begin{equation}  
\tanh\frac{q(\tau)}{2}=C e^{|\alpha|^2\tau},
\qquad
r(\tau)=C_1,
\end{equation}
for constants $C,C_1$, on the interval where $|Ce^{|\alpha|^2\tau}|<1$. Therefore, the solution for $q$ is
\begin{equation}\label{qgradsolution}
q(\tau)=2\,\mathrm{artanh}\!\left(Ce^{|\alpha|^2\tau}\right)
=\ln\!\left(\frac{1+Ce^{|\alpha|^2\tau}}{1-Ce^{|\alpha|^2\tau}}\right).
\end{equation}

This behavior extends to the $n$-dimensional case. {Using
$S(t)=\sum_{i=1}^n \alpha_i t_i$ from \eqref{loge}, we obtain
$\partial S/\partial t_i=\alpha_i$.  Recall the vectors $\beta^k=(\beta^k_i)$,
$k=1,\dots,n-1$,  spanning the radical distribution \eqref{radicaldistribution}, as in \eqref{ortho_alphabeta} i.e.}
$
\sum_{i=1}^n \alpha_i \beta^k_i=0, 
$
and denote perpendicular directions to $S(t)$ as $r^k$ such that
\(
{\partial r^k}/{\partial t_i}=\beta^k_i.
\)
Then{, abbreviating $S(t(\tau))$ along the gradient paths obeying \eqref{gradpath} {as $S$, we have}
\begin{align}
    \frac{dS}{d\tau}
    &=\sum_{i=1}^n \alpha_i \frac{dt_i}{d\tau}
    =\sum_{i=1}^n \alpha_i \alpha_i \sinh(S)
    =|\alpha|^2 \sinh(S),\label{dsdt}
\end{align}
and
\begin{align}
    \frac{d r^k}{d\tau}
    &=\sum_{i=1}^n\frac{\partial r^k}{\partial t_i}\frac{dt_i}{d\tau}
    =\sum_{i=1}^n\beta^k_i \alpha_i \sinh(S)=0,
\end{align}
by the orthogonality \eqref{ortho_alphabeta}.  Notice that the Equation \eqref{dsdt} coincides with \eqref{dqdt} after replacing $q$ by $S$. Hence, the solution for  $S(\tau)$ follows from \eqref{qgradsolution}.

The cost function $J=\cosh(S)-1$ evolves as
\begin{equation}
    \frac{dJ}{d\tau}
    =\frac{dJ}{dS}\frac{dS}{d\tau}
    =|\alpha|^2 \sinh^2(S)\ge 0,
\end{equation}
so the flow $\dot t=\nabla J$ corresponds to a monotone increase in the cost.

If instead one considers the gradient descent flow
\[
\frac{dt}{d\tau}=-\nabla J=-\alpha \sinh(S),
\]
then
\begin{align*}
     &\frac{dS}{d\tau}
     =-|\alpha|^2 \sinh(S),\\
    & \tanh\frac{S(\tau)}{2}
     =C e^{-|\alpha|^2\tau},\\
     &r^k=C_k,
\end{align*}
for constants $C,C_k$ (on the interval where $|C e^{-|\alpha|^2\tau}|<1$)
and
\begin{equation}
    \frac{dJ}{d\tau}
    =-|\alpha|^2 \sinh^2(S)\le 0.
\end{equation}
{Thus, along the negative gradient flow,  the cost $J$ does not increase strictly with $\tau$, and decreases strictly whenever $S\neq 0$. }

 \subsubsection*{Illustrated Trajectories on $M_x$}

The following figures illustrate several features on $M_x$. The dark curve represents the zero-cost, while the light blue curve, when present, indicates the secondary locus where the Ricci scalar diverges. Levi-Civita geodesics are shown in rainbow colors, where the hue depends on the fractional part of the curve parameter $\lambda$ (in particular, the curve is red when $\lambda$ is an integer). Gray arrows indicate the gradient of $J$, relevant for the gradient flow. The same sets of items are shown in both $xy$ and $qr$ coordinates (see \mbox{Figures \ref{fig:multiplot1} and \ref{fig:multiplot2}}). Notice that these are by no means an exhaustive set for all possible behavior of the Levi-Civita geodesics. Geodesics can be defined with different initial conditions (position and velocity), and different parameter values $\alpha_i$, so they can approach the singular surfaces or travel off to infinity in different ways. Geodesics may approach the singular locus, and in some numerical examples, they can appear to "intersect" it  for certain initial conditions (for example, $a=1/3$, $b=1/3$, $x_0=7$, $y_0=5$, $x_0' = y_0' = -1$). However, this is not a regular intersection. It reflects the breakdown of the geodesic equations near the singular set, where the Christoffel symbols become large. Therefore, the trajectories cannot be extended smoothly across the singular locus.
\begin{figure}[H]
    %\centering
    \includegraphics[width=0.45\linewidth]{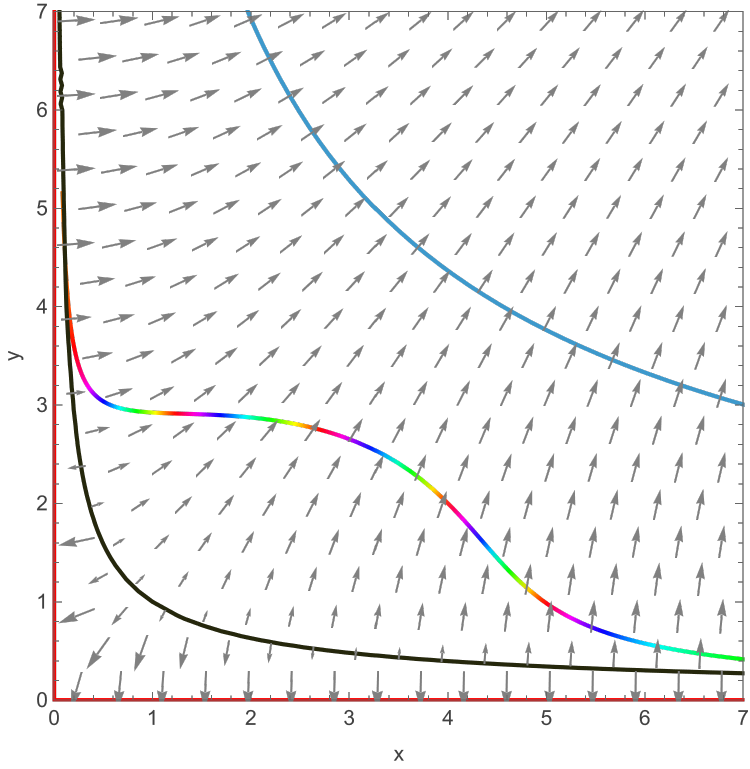}
    \includegraphics[width=0.45\linewidth]{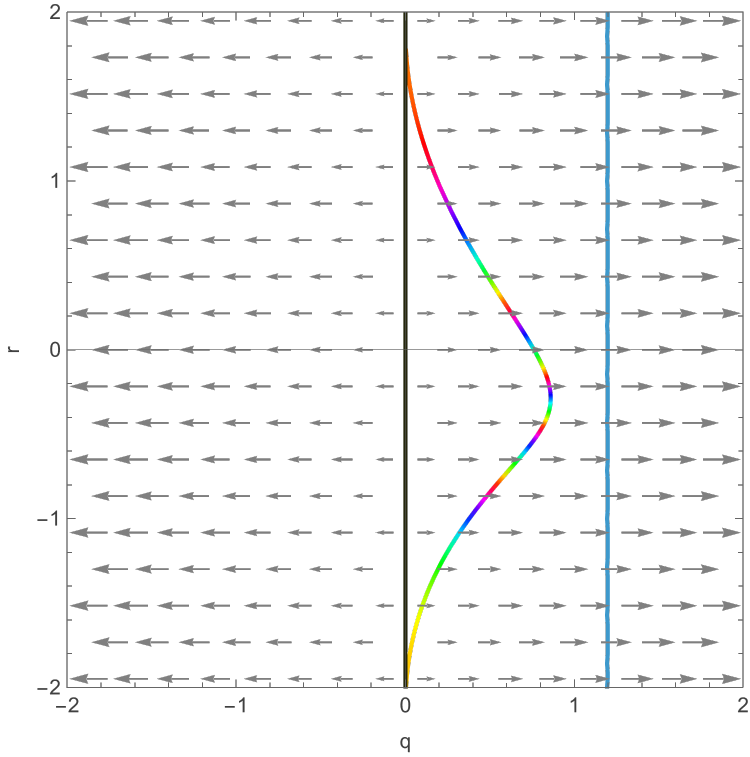}
    \caption{{System} %MDPI: 1. Please confirm whether explanations of the arrow and colored line need to be added to the figure caption.  %MZ it is okay as it stay
 plots for $a=1/3$, $b=1/2$ showing the zero-cost hypersurface (dark), singular locus (blue), gradient of $J$ (gray {arrows}), and a Levi-Civita geodesic with initial data $x(0)=4$, $y(0)=2$, $x'(0)=-1$, $y'(0)=1$, in $xy$ and $qr$ coordinates.}%MDPI: We added arrows here. please confirm %MZ it is okay, thanks
    \label{fig:multiplot1}
\end{figure}
\vspace{-6pt}
\begin{figure}[H]
   % \centering
    \includegraphics[width=0.45\linewidth]{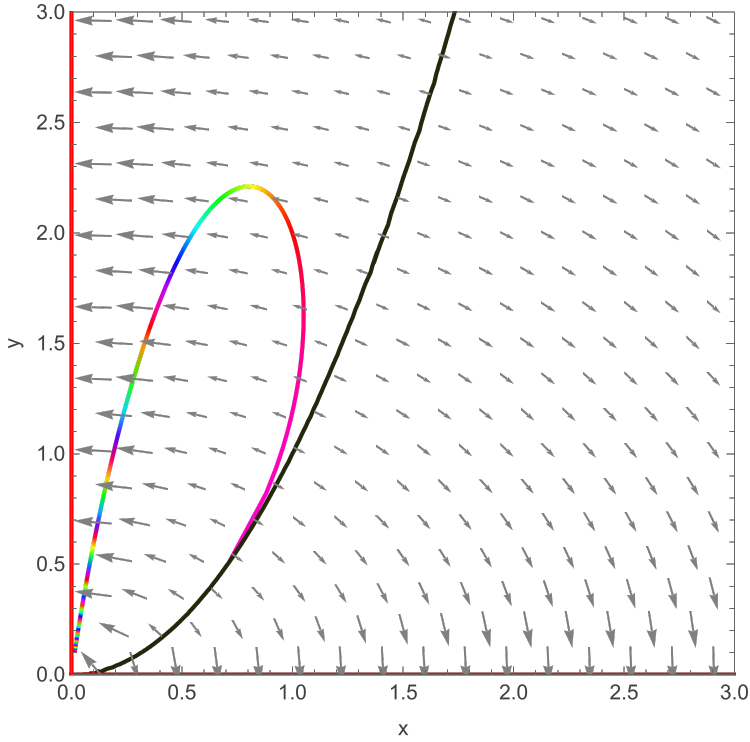}
    \includegraphics[width=0.45\linewidth]{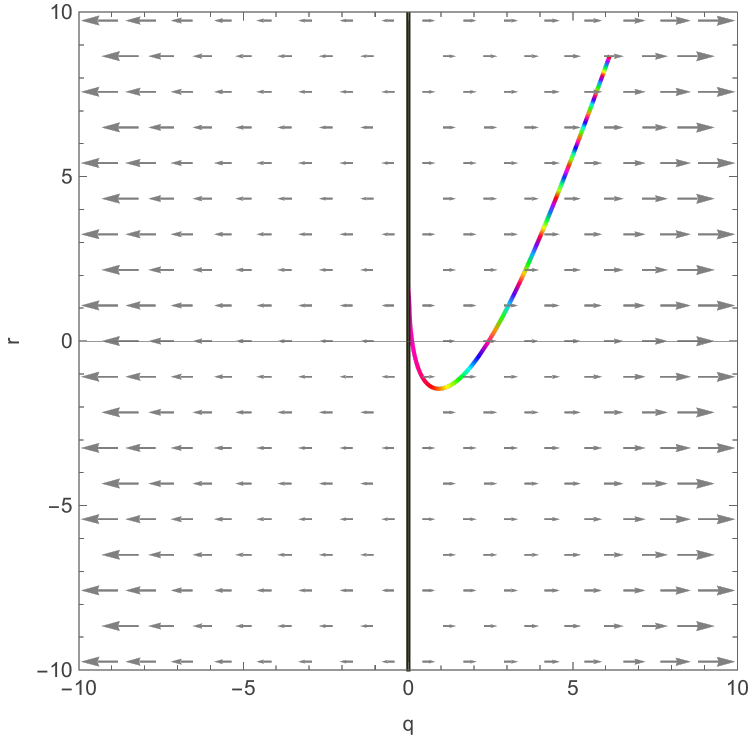}
    \caption {{System} %MDPI: Please confirm whether an explanation of the colored line needs to be added to the figure caption.  %MZ it is okay as it stay
 plots for $a=-2$, $b=1$ showing the zero-cost hypersurface (dark), the gradient of $J$ (gray arrows), and a Levi-Civita geodesic with initial data $x(0)=1$, $y(0)=2$, $x'(0)=-1$, $y'(0)=3$, in both $xy$ and $qr$ coordinates. 
    The color modulation tightens along one segment of the geodesic, indicating a decrease in the magnitude of  the velocity $|\dot{\gamma}(\lambda)|$.
    }
    \label{fig:multiplot2}
\end{figure}

The numerically computed geodesics (obtained in $xy$ coordinates) are substituted into the geodesic equations in $qr$ coordinates, i.e. into the left-hand sides {(LHS)} of \eqref{qgeoqr} and \eqref{rgeoqr}. We define the residual by
\begin{equation}
\text{residual}
=
\bigl|\text{LHS of \eqref{qgeoqr}}\bigr|
+
\bigl|\text{LHS of \eqref{rgeoqr}}\bigr|.
\label{reserr}
\end{equation}
For an exact solution, the residual is zero. In the numerical case, it remains small (typically of order $10^{-10}$), except near singularities where the coefficients become large and errors are amplified. A larger value may also appear near $\lambda=0$ due to the numerical construction of the solution. The residual error for the configurations in Figures \ref{fig:multiplot1} and \ref{fig:multiplot2} are shown in Figure \ref{fig:residual_error}.

\begin{figure}[H]
    %\centering
    \includegraphics[width=0.45\linewidth]{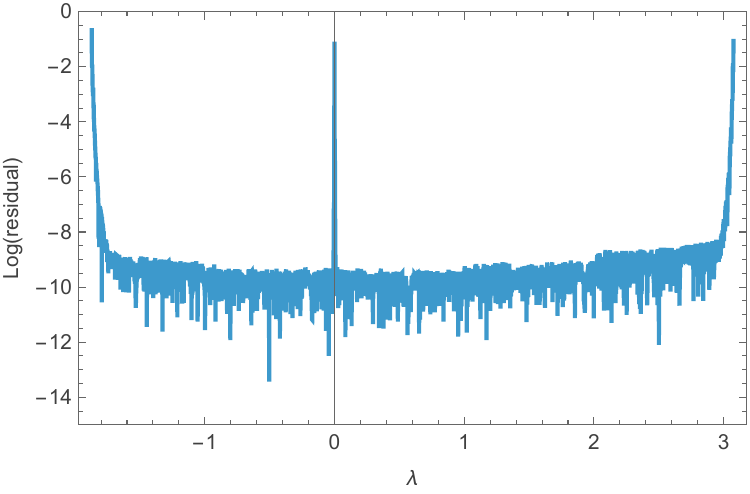}  \includegraphics[width=0.45\linewidth]{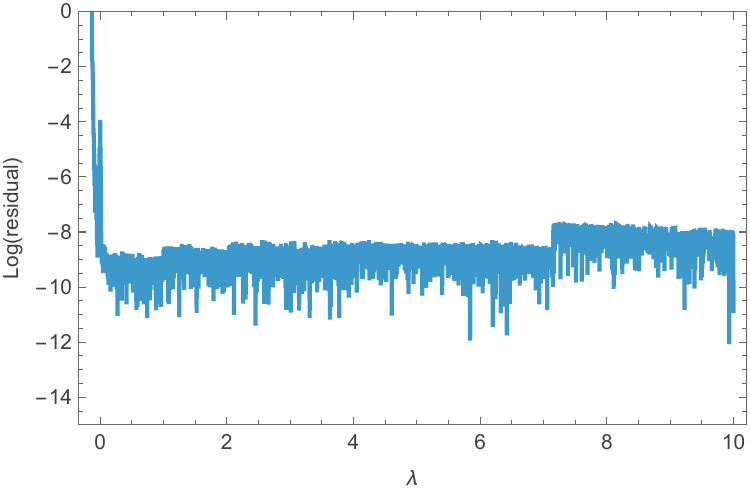}
\caption{Residual error \eqref{reserr} along representative Levi-Civita geodesics.
Left: example from Figure~\ref{fig:multiplot1}, where the error increases near singularities (due to large Christoffel symbols) and shows a local increase at $\lambda=0$ due to the numerical construction.
Right: example from Figure~\ref{fig:multiplot2}, where for positive $\lambda$ the geodesic is sufficiently far from the singularities that the error remains small.
}
    \label{fig:residual_error}
\end{figure}

\section{Application}\label{application}

In this section, we discuss the information-geometric interpretation of the cost function. First, we relate the cost function to the Itakura--Saito divergence. 
For positive scalars~\cite{Itakura1975}, we~have
\[
D_{\mathrm{IS}}(p\|q)=\frac{p}{q}-\log\!\left(\frac{p}{q}\right)-1 .
\]
A direct computation gives
\[
\frac12\bigl(D_{\mathrm{IS}}(1\|x)+D_{\mathrm{IS}}(x\|1)\bigr)
=\tfrac12\left(x+\frac1x-2\right)
=J(x).
\]

Thus, the multidimensional cost is the symmetrized Itakura--Saito divergence applied to $R(x)$ (\ref{RR}), with $R(x)=x$ in the one-dimensional case. This representation carries the Hessian-geometric structure induced by $J$.

The Bregman divergence associated with a convex potential $\phi$ satisfies
\[
D_\phi(x,x+\delta)=\tfrac12\langle \delta,(\nabla^2\phi)(x)\delta\rangle+O(|\delta|^3),
\]
such as in~{\cite{li2021}. }%MDPI: Refs. 17-23 are not cited in the main text. Please add. %MZ I added citations through the manuscript where they naturally fit the discussion. %For several references, I could not find a sufficiently relevant position without unnecessarily expanding the text, so I moved them to comments. If it is preferable to keep them in the bibliography, they may remain as general background references  even without explicit citation in the main text. 
In logarithmic coordinates, $J(t)=\cosh(\alpha\cdot t)-1$ is convex, and the Hessian metric \eqref{tmet} is the second-order term of the Bregman divergence associated with $J$ in arbitrary~dimension. 

 The Bregman interpretation does not extend to the Hessian in $x$-coordinates, since the metric is not positive definite on $(0,\infty)^n$.

The Hessian metric of the cost function $J$ in logarithmic coordinates
\[
g_{ij}(t)=\cosh(\alpha\cdot t)\,\alpha_i\alpha_j
\]
can  be realized as a Fisher--Rao metric of a statistical model~\cite{Amari2}.

Let $S=\alpha\cdot t$. A (not necessarily unique) statistical model which works is the family of normal distributions
\[
p(z;t)=\frac{1}{\sqrt{2\pi}}
\exp\!\left(-\frac{(z-m(S))^2}{2}\right),
\qquad
m(S)=\int_0^S \sqrt{\cosh u}\,du.
\]
Then,
\[
I_{ij}(t)=(m'(S))^2\,\alpha_i\alpha_j.
\]
Since $m'(S)=\sqrt{\cosh S}$, we obtain
\[
I_{ij}(t)=\cosh(\alpha\cdot t)\,\alpha_i\alpha_j=g_{ij}(t).
\]

Therefore, the logarithmic Hessian geometry associated to the cost function admits   an interpretation as a Fisher information geometry of a one-dimensional statistical model embedded in $\mathbb{R}^n$.

\section{Conclusions}\label{conclusion}

{The first contribution of this paper was the extension of the canonical reciprocal cost to multiple dimensions. While the one-dimensional cost is uniquely determined, the multidimensional extension is not unique in general. Starting from a one-variable cost function, we introduced a multidimensional extension parametrized by weights $\alpha$, and showed that permutation symmetry (equal weights), together with the dimensional reduction condition
\(
J(x,\dots,x)=J(x),
\)
leads to the choice $\alpha_i=\frac{1}{n}$, which is canonical in this sense.}

After the multidimensional extension was defined, we analyzed the geometric structures it induces. The main observation is that the same function generates qualitatively
different geometries depending on the chosen affine structure. In logarithmic coordinates,
the Hessian metric has constant rank one, which leads to a degenerate geometry with an
integrable $(n - 1)$-dimensional radical distribution and a direction given by $\alpha$. In contrast,
in the original $x$-coordinates the corresponding Hessian is generically nondegenerate and
defines a pseudo-Riemannian metric away from a singular locus.

We further showed that these differences are reflected at the level of geodesics. The affine
geodesics on both $M_x$ and $M_t$ are straight lines, and in $M_t$ are globally defined,
while in $M_x$ the domain restriction $x_i > 0$  restricts their extendibility. The Levi-Civita
geodesics introduce additional structure, with coefficients depending only on $S = \alpha \cdot t$,
which confirms that the geometry is controlled by a single direction. Therefore, the function $J$ induces three distinct families of curves: affine geodesics of $M_t$, affine geodesics of $M_x$, and Levi-Civita geodesics on $M_x$, depending on the chosen affine or metric structure. 

Independent of the Hessian structure or geodesics, it is natural to examine gradient paths of the cost function. Here, the gradient paths are integral curves of the vector field $\sinh(\alpha\cdot~t)\,\alpha$; hence, the velocities  lie in the one-dimensional distribution generated by $\alpha$. Gradient paths therefore evolve only in the direction $\alpha$.

{
The degeneracy of the Hessian metric has a   geometric meaning. In logarithmic coordinates, the metric has rank one and depends only on the scalar variable
\(
S=\sum_{i=1}^n \alpha_i \log x_i,
\)
so that directions orthogonal to $\alpha$ lie in the kernel of the metric. Thus, the geometry is effectively one-dimensional, determined by the variable $S$, while the remaining directions do not contribute to the metric.}

{Our further work will include} multidimensional uniqueness and analysis near the degeneracy set. Other possible extensions could be examining different coordinate systems beyond ratio type $x$ and logarithmic type $t$ to see if a Hessian cost manifold can be constructed with a globally defined inverse metric, or if they have a particularly interesting interpretation as statistical manifolds.

 \medskip

\noindent{\bf Acknowledgements.}
The authors thank Elshad Allahyarov,  Sebastian Pardo-Guerra, and  Megan Simons for their valuable comments on an earlier version of the paper.

\medskip

\noindent{\bf Author contributions.} Conceptualization, J.W.;  Methodology, J.W.,  M.Z. and P.B; Software, J.W.; Validation, J.W., M.Z. and P.B.; Formal Analysis, M.Z., P.B. and J.W.; Investigation, J.W., M.Z. and P.B.; Resources, J.W.; Writing-Original Draft Preparation, J.W.; Writing-Review and Editing, M.Z., P.B., and J.W.; Funding Acquisition, J.W.

\end{document}